\documentclass[11pt,reqno]{amsart}
\setlength{\voffset}{-.25in}
\usepackage{amssymb,latexsym}
\usepackage{graphicx}
\usepackage{url}		%does nice formatting of URLs

\usepackage{qtree}
\usepackage{mathtools}

\textwidth=6.175in
\textheight=9.0in
\headheight=13pt
\calclayout

\makeatletter
\newcommand{\monthyear}[1]{%
  \def\@monthyear{\uppercase{#1}}}
  %%%
\def\@monthyear{\uppercase{}}
\newcommand{\volnumber}[1]{%
  \def\@volnumber{\uppercase{#1}}}
  %%%
   \def\@volnumber{\uppercase{ }}
\AtBeginDocument{%
\def\ps@plain{\ps@empty
  \def\@oddfoot{\@monthyear \hfil \thepage}%
  \def\@evenfoot{\thepage \hfil \@volnumber}}
\def\ps@firstpage{\ps@plain}
\def\ps@headings{\ps@empty
  \def\@evenhead{%
    \setTrue{runhead}%
    \def\thanks{\protect\thanks@warning}%
   %%%
    \uppercase{}\hfil}%
  \def\@oddhead{%
    \setTrue{runhead}%
    \def\thanks{\protect\thanks@warning}%
    \hfill\uppercase{Rational Enumerations}}%
  \let\@mkboth\markboth
  \def\@evenfoot{%
    \thepage \hfil \@volnumber}%
  \def\@oddfoot{%
    \@monthyear \hfil \thepage}%
  }%
\footskip=25pt
\pagestyle{headings}%
}
\makeatother

\newcommand{\Q}{{\mathbb Q}}

\newcommand{\Z}{{\mathbb Z}}

\theoremstyle{plain}
\numberwithin{equation}{section}
\newtheorem{thm}{Theorem}[section]
\newtheorem{theorem}[thm]{Theorem}

\newtheorem{corollary}[thm]{Corollary}

\begin{document}
%% replace the values in the next three lines by the correct information
%\monthyear{Month Year}
%\volnumber{Volume, Number}
\setcounter{page}{1}

\title{Re$^3$counting the Rationals}
\author{Sam Northshield}
\address{Department of Mathematics\\
                SUNY-Plattsburgh\\
                Plattsburgh, NY 12901}
                \email{northssw@plattsburgh.edu}
%\thanks{Research supported in part by SUNY-Plattsburgh travel grant.   Thanks also to hospitality of organizers of the Fibonacci Conference of 2018.}

\begin{abstract}
In 1999, Neil Calkin and Herbert Wilf wrote ``Recounting the rationals" 
which gave an explicit bijection between the positive integers and the positive rationals.    We find several different (some new) ways to construct this enumeration and thus create pointers for generalizing.   Next, we use circle packings to generalize and find two other enumerations.    Surprisingly, the three enumerations are all that are possible by using this technique. 
 The proofs involve, among other things, ``negative" continued fractions,
Chebyshev polynomials, Euler's totient function, and generalizations of Stern's diatomic sequence.   Finally we look at some of the remarkable similarities -- and differences -- of these sequences.   \end{abstract}

\maketitle

\section{Introduction}

In 1999, Neil Calkin and Herbert Wilf wrote their charming ``Recounting the rationals" \cite{CW}
which gave an explicit listing of the positive rationals beginning thus:

{\small $$\frac11,\frac21,\frac 12, \frac31, \frac23, \frac 32, \frac13, \frac41, \frac 34, \frac53, \frac25, \frac52, \frac35, \frac43, \frac14, \frac51, \frac45, \frac74, \frac37, \frac83, \frac58,\frac75, \frac27, \frac72, \frac57, \frac85, \frac38, \frac73, \frac47, \frac54, \frac15, \frac61,\ldots.$$}
Throughout this paper, we label this sequence $(r_n)_{n=1}^{\infty}$.
 \vskip .1 in
 
There are many  ways to arrive at this sequence.  In Section 2, we present several.   In particular,  we recall the Calkin-Wilf tree which arranges the positive rationals on the vertices of a rooted binary tree.   To show that all positive rationals appear on the tree, we introduce the ``Tree Theorem", a useful tool for enumeration that we also use later in Section 3.       Another natural setting for $(r_n)$ is in terms of Stern's diatomic array and in terms of Stern's diatomic sequence.  We are quickly led to a ``semi-recursive" formula
$$r_n=2\nu_2(n)+1-\dfrac1{r_{n-1}}$$  
where $\nu_2(n)$ is the usual valuation function that counts the number of times 2 divides $n$.

From this, we find a new source for $(r_n)$ in terms of a greedy algorithm:   for a given list of rationals $[l_0,l_1,\ldots,l_k]$, append $l_{k+1}$ defined as the least $N-1/l_k$ not already on the list.    Starting with $[\infty]$, iterates of this algorithm gives $(r_n)$.   

This leads to ``minus continued fractions":  for $$(a_1, a_2, a_3, \ldots ):=a_1-\dfrac1{a_2-\dfrac 1{a_3 -\cdots}},$$
$$r_n=(1),(3,1), (1,3,1), (5,1,3,1),(1,5,1,3,1), \ldots$$
and also 
$$r_n=(1), (2), (1,2), (3), (1,3), (2,2), (1,2,2),\ldots .$$

Finally, since Stern's sequence has closed formulas -- both a Binet formula and a formula as a sum across Pascal's triangle mod 2 \cite{N1} -- we can create closed formulas for $(r_n)$.   

Through all these ways of constructing $(r_n)$ we are led to many possible directions to generalize.    Our initial choice actually goes further afield to varieties of circle packings.    In Section 3, we look at the family of Ford circles and how they are parameterized by the rationals.    It turns out that Stern's sequence appears here.     
More general circle packings are introduced and, from them, we construct analogues of Stern's sequence.    From these, we construct two other sequences of the positive rationals that begin:  
\vskip .1 in
 {\small $$\frac21, \frac11, \frac41, \frac32,\frac23, \frac31, \frac43, \frac12,\frac61,\frac53, \frac45, \frac72, \frac{10}7, \frac35,\frac83, \frac54, \frac25, \frac51, \frac85,\frac34, \frac{10}3, \frac75, \frac47, \frac52,\frac65, \frac13, \frac81,\frac74,\frac67,\frac{11}3,\frac{16}{11},\frac58,\ldots$$}
 \vskip .1 in
and

{\small $$\frac31, \frac21,\frac32,\frac11,\frac61, \frac52, \frac95,\frac43, \frac34,\frac51,\frac{12}5,\frac74,\frac97,\frac23,\frac92,\frac73,\frac{12}7, \frac54,\frac35,\frac41,\frac94,\frac53,\frac65,\frac12,\frac91,\frac83, \frac{15}8, \frac75,\frac67,\frac{11}2,\frac{27}{11},\frac{16}9\ldots.$$}
\vskip .1 in

 Surprisingly, the three enumerations are all that are possible by using the ``circle" technique -- all others necessarily contain irrational numbers as well.
 A fourth is introduced as well;  here we get rationals in $\Q(\phi)^+$ (we conjecture that we get all of them).   

In Section 4, we look at some of the remarkable similarities -- and differences -- of these sequences.   In particular,  semi-recursive formulas, tree inducing functions, greedy algorithms, and generating functions are all studied.    Further, we look at the ``degree" of each sequence (the critical exponent $\delta$ for which $\limsup x_n/n^{\delta}$ is finite).

Ponton \cite{P} has recently found these three sequences as well as some corresponding trees that generate them.   His paper and this one are still quite complementary.

%%%%%%%%%%%

\section{The basic enumeration, and several ways to get it}
In this section, we consider the enumeration of the positive rationals
$$r_n=\frac11,\frac21,\frac 12, \frac31, \frac23, \frac 32, \frac13, \frac41, \frac 34, \frac53, \frac25, \frac52, \frac35, \frac43, \frac14, \frac51, \frac45, \frac74, \frac37, \frac83, \frac58,\frac75, \frac27, \frac72, \frac57, \frac85, \frac38, \frac73, \frac47, \frac54,,\ldots.$$
The reader is invited to find a pattern for generating this sequence before reading further.   

Our first way of constructing this sequence is by arranging the positive rationals on a tree and then reading off the entries.   

\subsection{Tree Theorem}

A function from a countable set $S$ to itself can be thought of as inducing a digraph, with vertex set $S$, where one assigns to  each vertex a directed edge from that vertex to its image under $f$.   It is generally possible to get loops (corresponding to fixed points of $f$) and cycles.   
The following theorem gives a condition that guarantees that the digraph is actually a union of disjoint rooted trees. 

\begin{theorem}[Tree Theorem]
If $S$ is a countable set,  $F:S\rightarrow S$ with set of fixed points $S_0$, and $\Phi: S\rightarrow {\mathbb Z}^+$ such that, for all $x\not\in S_0$,
$$\Phi(F(x))<\Phi(x),$$
then $F$ canonically arranges the elements of $S$
 on the vertices of a collection of rooted directed trees (with set of respective roots $S_0$).
\end{theorem}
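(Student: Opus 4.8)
The plan is to regard $F$ as a ``parent'' assignment: each non-fixed point $x$ is joined by its single out-edge to $F(x)$, while each fixed point $p \in S_0$ carries only a self-loop and will serve as a root. To prove that the resulting functional digraph is a disjoint union of rooted trees with root set $S_0$, I would establish two facts and then assemble them: (i) iterating $F$ from any $x$ reaches $S_0$ in finitely many steps, and (ii) the digraph has no directed cycle other than the self-loops at fixed points. The function $\Phi$ is the only tool available, and it is used as a strictly decreasing potential.

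For (i), I would track the orbit $x, F(x), F^2(x), \dots$ and the induced values $\Phi(x), \Phi(F(x)), \dots$. As long as the current point lies outside $S_0$, the hypothesis forces a strict drop $\Phi(F^{k+1}(x)) < \Phi(F^{k}(x))$. Since these values lie in $\Z^{+}$ and are bounded below by $1$, a strictly decreasing chain cannot continue indefinitely; hence some $F^{m}(x) \in S_0$, and in fact $m < \Phi(x)$. Once a fixed point is reached the orbit is constant, so each $x$ determines a unique fixed point $R(x)$, its eventual image, and grouping points by the value of $R$ partitions $S$ into blocks indexed by $S_0$.

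For (ii), suppose toward a contradiction that $x_0 \to x_1 \to \cdots \to x_k = x_0$ is a directed cycle with $k \ge 1$ and the vertices not all equal. No $x_i$ can be a fixed point: if $x_i \in S_0$ then $x_{i+1} = F(x_i) = x_i$, collapsing consecutive vertices and, propagated around, the whole cycle. Thus every edge of the cycle satisfies $\Phi(x_{i+1}) < \Phi(x_i)$, and chaining these strict inequalities around the loop yields $\Phi(x_0) > \Phi(x_0)$, which is absurd. Therefore the only cycles are the self-loops at the points of $S_0$.

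Finally I would assemble these observations: a functional digraph in which every forward orbit terminates and whose only cycles are self-loops decomposes, block by block, into rooted trees. Concretely, the block $R^{-1}(p)$ contains exactly one fixed point (namely $p$, since any fixed point in the block is its own eventual image), is connected to $p$ through the $F$-orbits by (i), and is acyclic by (ii); with every non-root carrying the unique parent $F(x)$, this is precisely a tree rooted at $p$ and oriented toward the root. I expect the termination step (i) to be the real content, since it is where the codomain $\Z^{+}$ --- a well-ordered set --- is essential; the remaining steps are bookkeeping about functional graphs, the only subtlety being to handle the fixed-point loops correctly so that $S_0$ is exactly the set of roots.
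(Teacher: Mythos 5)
Your proposal is correct and follows essentially the same route as the paper: the heart of both arguments is that $\Phi$, taking values in the well-ordered set $\mathbb{Z}^+$, strictly decreases along orbits outside $S_0$, forcing every forward orbit to reach a unique fixed point in finitely many steps. Your separate no-cycles step (ii) is just a more explicit version of what the paper extracts from the fact that $F$ has out-degree at most one (uniqueness of the path to the root), so the two proofs differ only in bookkeeping.
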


\begin{proof}  On the vertex set $S$, 
let ${\mathcal E}$ be the set of all directed edges $[x,F(x)]$ where $x$ ranges over all elements of $S-S_0$ and define $G$ to be the directed graph $(S,{\mathcal E})$.   

Let $F_n$ denote the $n$th iterate of $F$.   For any $x\in S$, the sequence $\Phi(F_n(x))$ is  a decreasing sequence of positive integers but, by the well-ordering of ${\mathbb Z}^+$, cannot be strictly decreasing.   Hence, there exists (a unique) $z\in S_0$ and $N\in{\mathbb Z}^+$ so that $F_N(x)=z$.  Let $\sigma(x)$ denote the minimum such $N$.  
It follows that there is a path of length $\sigma(x)$ from $x$ to $z$.   Since $F$ is a function (so the out-degree of every vertex is at most 1), that path is unique.   Hence, the connected  component of $G$ containing $x$ is a directed tree with root $z$.  The result follows. 
\end{proof}

This theorem is a discrete version of the ``contraction mapping theorem" which states that, no matter the starting point, the iterates of any strict contraction on a set must converge to a fixed point.  

\subsection{Calkin-Wilf Tree}
We introduce a tree that first appeared in  ``Recounting the rationals" \cite{CW} by Calkin and Wilf.  Our approach to showing it contains all the positive rationals will be by the Tree Theorem.  

Consider $F: x\longmapsto \max\left\{\dfrac x{1-x}, x-1\right\}$ and 
$\Phi(a/b)=a+b$.  It is easy to see that for $x\neq 1$, $\Phi(F(x))<\Phi(x)$ and so, by the Tree Theorem, every positive rational appears exactly once on the tree.   

The map $F$ may be inverted in order to get a procedure for actually \emph{constructing} the tree:
$$F^{-1}(x)=\left\{\frac x{1+x}, x+1\right\}.$$
 \begin{figure}[htbp] 
   \centering
   \includegraphics[width=4in]{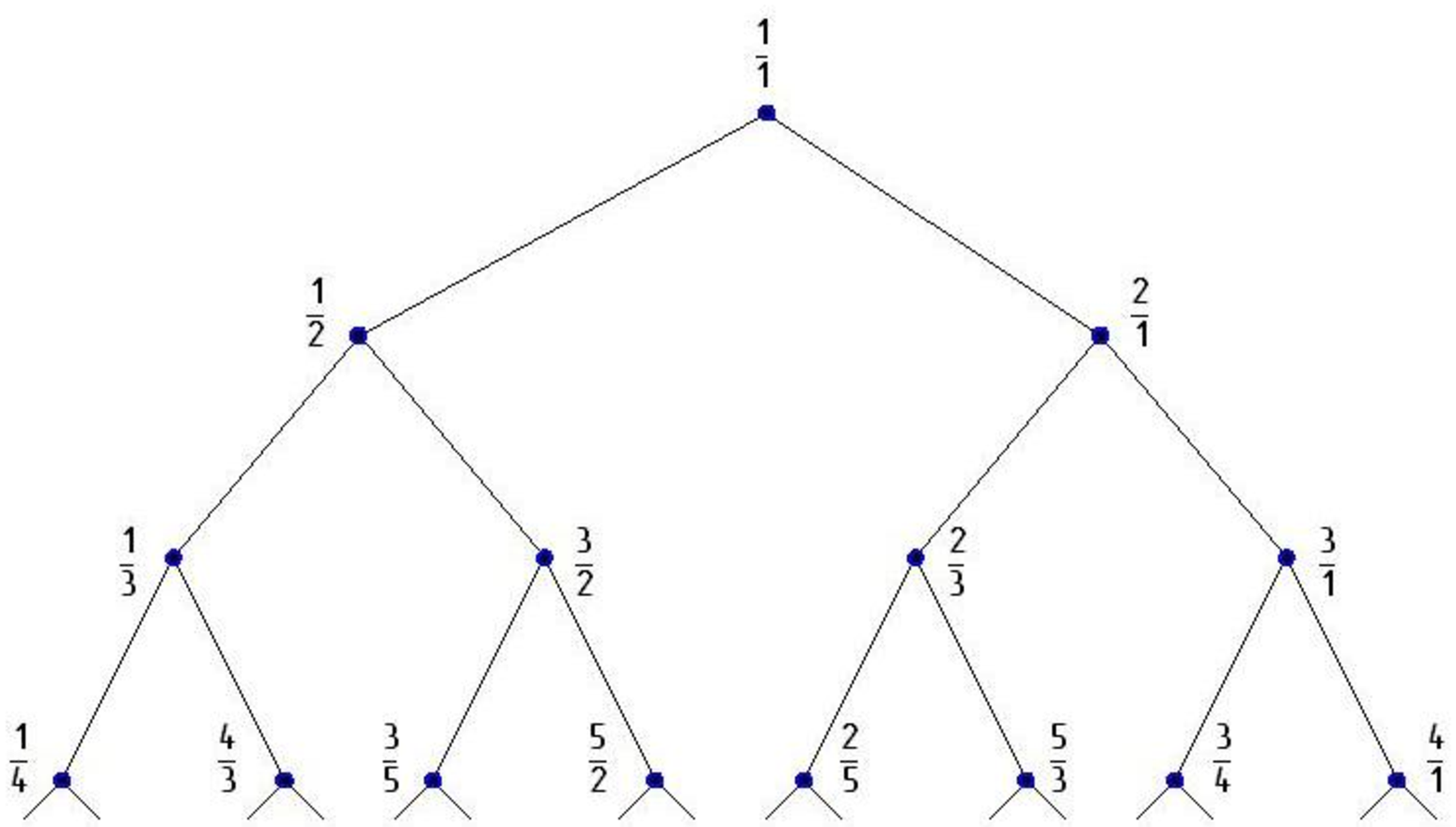} 
 \end{figure}
 
 Reading this tree like a book, but right to left, gives the enumeration of the positive rationals:
   $$\frac11,\frac21,\frac 12, \frac31, \frac23, \frac 32, \frac13, \frac41, \frac 34, \frac53, \frac25, \frac52, \frac35, \frac43, \frac14, \frac51, \frac45, \frac74, \frac37, \frac83, \frac58,\frac75, \frac27, \frac72, \frac57, \frac85, \frac38, \frac73, \frac47, \frac54, \frac15, \frac61,\ldots.$$
\vskip .1 in
\subsection{Stern's diatomic array and sequence}

Probably the clearest setting for understanding the sequence $(r_n)$ is Stern's diatomic array,  sometimes thought of as ``Pascal's triangle with memory", which  begins thus:
$$\begin{array}{cccccccccccccccccc}1&&&&&&&&&&&&&&&&1\\1&&&&&&&&2&&&&&&&&1\\1&&&&3&&&&2&&&&3&&&&1
\\1&&4&&3&&5&&2&&5&&3&&4&&1\\1&5&4&7&3&8&5&7&2&7&5&8&3&7&4&5&1\\.&.&.&.&.&.&.&.&.&.&.&.&.&.&.&.&.\end{array}$$
It is defined recursively as follows:  Start with row 1  1.  Then, given the $n$th row, define the next one by copying
the numbers on the $n$th row but inserting, in each gap, the sum of the two numbers above.   

The numbers in the diatomic array, read like a book (but deleting the right-most column of 1s),  form what is known as Stern's diatomic sequence which begins:
$$a_n=1,1,2,1,3,2,3,1,4,3,5,2,5,3,4,1,5,4,7,3,8,5,7,2,7,5,8,3,7,4,5,1,6,\ldots$$   
It is easy to see that it is defined recursively by
$$a_1=1,\qquad a_{2n}=a_n,\qquad a_{2n+1}=a_n+a_{n+1}\eqno{(1)}$$

Every pair $(a,b)$ of consecutive numbers in the diatomic array corresponds, via $(a,b)\mapsto\frac ab$, to the rationals appearing on the Calkin-Wilf tree
and so $$r_n=\dfrac{a_{n+1}}{a_n}.\eqno{(2)}$$

See \cite{N1} and its references, and \cite{Sl}, sequence A002478, for information about this exceptional, and exceptionally well studied, sequence.

\subsection{A semi-recursive formula}
A ``semi-recursive" formula for $(r_n)$ is a formula of the form $r_n=g(r_{n-1}, d_n)$ where $(d_n)$ is some fixed sequence and $g$ is some function. 

Let $$\nu_k(n):= \max\{j: k^j|n\}$$
be the number of times $k$ divides $n$.  

Since $$a_{2m+2}+a_{2m}=a_{2m+1}$$
and
$$a_{2m+1}+a_{2m-1}=a_{m+1}+a_{m-1}+2a_m,$$
it follows by induction on $\nu_2(n)$ that
$$\dfrac{a_{n+1}+a_{n-1}}{a_n}=2\nu_2(n)+1.$$
From this and equation (2), it follows that 
$$r_n=2\nu_2(n)+1-\dfrac1{r_{n-1}}.\eqno{(3)}$$

\subsection{A recursive formula}

Note that $a_{2n}/a_{2n+1}=a_n/(a_n+a_{n+1})<1$ and so $\lfloor a_{2n}/a_{2n+1}\rfloor=0=\nu_2(2n+1)$.
Further, if $\lfloor a_{n-1}/a_n\rfloor=\nu_2(n)$, then $\lfloor a_{2n-1}/a_{2n}\rfloor=\lfloor (a_{n-1}+a_n)/a_{n}\rfloor=\nu_2(n)+1=\nu_2(2n)$.
By induction,
$$\left\lfloor\dfrac1{r_n}\right\rfloor=\nu_2(n+1).$$
Substituting this into equation (3), we find the recursive formula
$$r_{n+1}=2\left\lfloor\dfrac1{r_n}\right\rfloor+1-\dfrac1{r_n}\eqno{(4)}$$
which can be rewritten as
$$r_{n+1}=1+\dfrac1{r_n}-2\left\{\dfrac1{r_n}\right\}\eqno{(5)}$$
where $\{x\}$ denotes the fractional part of $x$.    This remarkable result  is equivalent to one first noticed by Moshe Newman (see Ponton \cite{P}).

Finally, in terms of Stern's sequence, we may rewrite this as
$$a_{n+1}=a_n+a_{n-1}-2(a_{n-1}\text{ mod } a_n).\eqno{(6)}$$

\subsection{A greedy algorithm}

Define a list of numbers inductively as follows:   given a list $[l_1,l_2,\ldots,\l_k]$,  append 
$l_{k+1}$, defined as the smallest number of the form $n-1/l_k$ ($n\in{\mathbb Z}^+$) not already on the list.   
Starting with the list $[1]$, we get
 $$\frac11,\frac21,\frac 12, \frac31, \frac23, \frac 32, \frac13, \frac41, \frac 34, \frac53, \frac25, \frac52, \frac35, \frac43, \frac14, \frac51, \frac45, \frac74, \frac37, \frac83, \frac58,\frac75, \frac27, \frac72, \frac57, \frac85, \frac38, \frac73, \frac47, \frac54, \frac15, \frac61,\ldots.$$

To see that this sequence is the same as $(r_n)$,   note that by equations (1) and (2), for all $n$,
$$r_{2n}=1+r_n \qquad \text{ and } \qquad r_{2n+1}=1-\frac1{r_{2n}}.$$
Hence,  for every odd $n$,  $r_n<1$ and $r_{2^kn}=k+r_n$.   

Let $m$ be the smallest number for which $l_m\neq r_m$.  
By equation (4), $r_m=M-\frac1{r_{m-1}}$ for some integer $M$ and, for some  $N\le M$, 
$$l_m=N-\frac1{l_{m-1}}=N-\frac1{r_{m-1}}.$$   Hence  $$l_{m/2^j}=r_{m/2^j}=N-j-\frac1{r_{m-1}}$$ is already on the list and thus $M=N$.   

By induction, the greedy algorithm gives the sequence $(r_n)$.

\subsection{Minus continued fractions}
Keeping with the nomenclature of Katok \cite{Katok}, we call expressions of the following form ``minus continued fractions":
$$(a_1,a_2,a_3,a_4,\ldots):=a_1-\dfrac1{a_2-\dfrac1{a_3-\dfrac1{a_4-\cdots}}}.$$

By equation (3), the positive rationals {$$\frac11,\frac21,\frac 12, \frac31, \frac23, \frac 32, \frac13, \frac41, \frac 34,\ldots$$} can be written in
terms of minus continued fractions:
$$(1), (3,1), (1,3,1), (5,1,3,1), (1,5,1,3,1), (3,1,5,1,3,1), (1,3,1,5,1,3,1),  \ldots$$
with the sequence $2\nu_2(n)+1$ in plain sight.  

Minus continued fractions are \emph{not} unique and we have an alternative formulation.
Given a formal list $\ell=[w_1,w_2,\ldots,w_n]$, define $1+\ell:=[1+w_1, w_2,\ldots, w_n]$ and $1*\ell:=[1,w_1,w_2,\ldots,w_n]$.  
Starting with the empty list $\ell_0:=[ \hskip .05 in]$, define
$$\begin{cases}
&\ell_{2n}:=1+\ell_n\\&\ell_{2n+1}:=1*\ell_{2n}\end{cases}\eqno{(8)}$$
We get the sequence of lists
{ $$[1],[2],[1,2],[3],[1,3],[2,2],[1,2,2], [4],[1,4],[2,3],[1,2,3], [3,2], \ldots$$}
When turned into minus continued fractions:
{ $$(1), (2), (1,2), (3), (1,3), (2,2), (1,2,2), (4), (1,4), (2,3), (1,2,3), (3,2), \ldots$$}
we get our enumeration again (exercise for the reader).

\subsection{Closed formulas}
It is easy to see that, by equation (1) (see \cite{N1}, section 3),
the generating function of $(a_{n+1})$ satisfies
$$A(x)=(1+x+x^2)A(x^2)=\prod_{n\ge 1}(1+x^{2^n}+x^{2^{n+1}}).$$
Note: This implies that $a_{n+1}$ counts the number of ``hyperbinary representations" of $n$ (i.e., the number of ways to represent $n$ as a sum of powers of two with no power appearing more than twice).   

\vskip .1 in
This leads to a closed formula:  letting  $\langle a,b\rangle_2$ be 0 or 1 according to whether $a$,$b$ share non-zero digits in their respective binary expansions or not,
$$a_{n+1}=\sum_{a+2b=n}\langle a,b\rangle_2.\eqno{(10)}$$
By a well-known theorem of Kummer ( \cite{Gr}, \cite{Ku} ) stating that $\nu_p\left(\binom{a+b}{b}\right)$ is the number of carries when adding $a$ and $b$ base $p$,
$$\langle a,b\rangle_2=\left[\binom{a+b}{b}  \mod 2\right]$$
and we have the analogous formulas:  the first is the representation of Fibonacci numbers as diagonal sums in Pascal's triangle, the second, due to Carlitz \cite{Car}, 
$$F_{n+1}=\sum_{a+2b=n}\binom{a+b}{b},  \hskip .2 in a_{n+1}=\sum_{a+2b=n}\left[\binom{a+b}{b}  \mod 2\right].$$

 Here is a further similarity with Fibonacci numbers:  Recalling Binet's formula 
$$F_{n+1}=\frac{\phi^{n+1}-\overline\phi^{n+1}}{\phi-\overline\phi}=\sum_{k=0}^n \phi^{k}\overline\phi^{n-k},$$
we have
$$a_{n+1}=\sum_{k=0}^n \sigma^{s_2(k)}\overline\sigma^{s_2(n-k)}\eqno{(11)}$$
where $\sigma$ is a primitive sixth root of unity and $s_2(n)$ is the number of ones in the binary expansion of $n$ (see \cite{N1}, Prop. 4.4).

Therefore we have two closed formulas for $r_n$:
$$r_n=\dfrac{\sum_{a+2b=n}\left[\binom{a+b}{b}  \mod 2\right]}{\sum_{a+2b=n-1}\left[\binom{a+b}{b}  \mod 2\right]}$$
and
$$r_n=\dfrac{\sum_{k=0}^n \sigma^{s_2(k)}\overline\sigma^{s_2(n-k)}}{\sum_{k=0}^{n-1}\sigma^{s_2(k)}\overline\sigma^{s_2(n-1-k)}}.$$

\vskip .2 in
\subsection{Ways to view $(r_n)$}
In conclusion, we summarize some of the places that the enumeration
$$\frac11,\frac21,\frac 12, \frac31, \frac23, \frac 32, \frac13, \frac41, \frac 34, \frac53, \frac25, \frac52, \frac35, \frac43, \frac14, \frac51, \frac45, \frac74, \frac37, \frac83, \frac58,\frac75, \frac27, \frac72, \frac57, \frac85, \frac38, \frac73, \frac47, \frac54, \frac15, \frac61,\ldots.$$
occurs:

\begin{itemize}
\item a tree defined by a contraction on ${\mathbb Q}^+$,\vskip .1 in

\item  $\frac{a_{n+1}}{a_n}$ where $a_n$ is Stern's diatomic sequence,\vskip .1 in

\item a semi-recursive formula $r_n=2\nu_2(n)+1-\frac1{r_{n-1}}$,\vskip .1 in

\item a recursive formula $r_{n+1}=1+\dfrac1{r_n}-2\left\{\dfrac1{r_n}\right\},$\vskip .1 in

\item a greedy algorithm, \vskip .1 in

\item minus continued fractions (two different ways), and \vskip .1 in

\item closed formulas based on those for $(a_n)$.\vskip .1 in

\end{itemize}

%%%%%

\vskip .3 in

\section{Circle packings and some new enumerations}

For real  $x,y$, let $C_{x,y}$ be the circle of center $(x/y, 1/2y^2)$ and radius $1/2y^2$.   
Every such circle is thus above and tangent to the horizontal axis.    By the Pythagorean theorem, 
$C_{x,y}$ and $C_{u,v}$ are tangent to each other -- we write $C_{x,y}||C_{u,v}$ -- if and only if $|xv-yu|=1$.

A M\"obius transformation is a function of the form
$$\begin{pmatrix} a&b\\c&d\end{pmatrix}(z):=\frac{az+b}{cz+d}$$
where the defining matrix is non-singular.   
M\"obius transformations preserve circles and tangencies.   Further,  it is not hard to see (\cite{N2}, Lemma 6) that when $ad-bd=1$,
$$\begin{pmatrix} a&b\\c&d\end{pmatrix}\left(C_{x,y}\right) = C_{ax+by,cx+dy}.$$

\subsection{Ford circles}  

Consider the array of circles
$$C_{0,1}||C_{1,1}||C_{1,0}$$
 \begin{figure}[htbp] 
   \centering
   \includegraphics[width=2.3in]{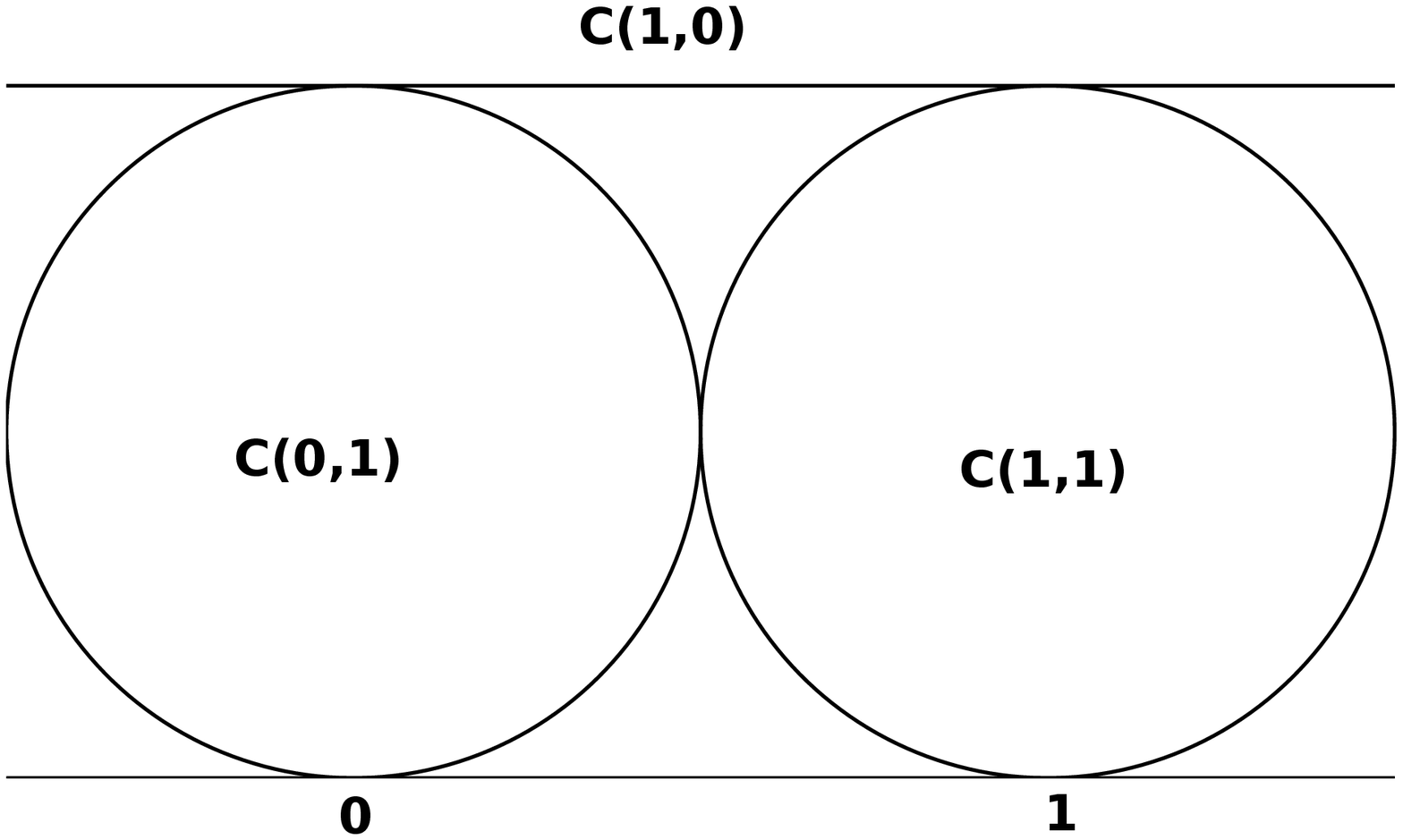}
  \end{figure}

Given circles $C_{0,1}$ and $C_{1,0}$ we formed a new one, $C_{1,1}$, by componentwise addition.
For tangent circles $C_{a,b}||C_{c,d}$,  there is a M\"obius transformation taking $C_{0,1}$ and $C_{1,0}$ to $C_{a,b}$ and $C_{c,d}$, respectively, and thus taking $C_{1,1}$ to $C_{a+c,b+d}$.   
We are thus led to 
the iterative procedure
$$\begin{array}{ccc} C_{a,b}&&C_{c,d}\\
C_{a,b}& C_{a+c,b+d}&C_{c,d}\end{array}$$
Starting with our initial configuration, successive iterations give us
 the ``Ford circles" (see  \cite{F} and \cite{N3}).   

\begin{figure}[htbp] 
\centering{
   \includegraphics[ width=.8 in]{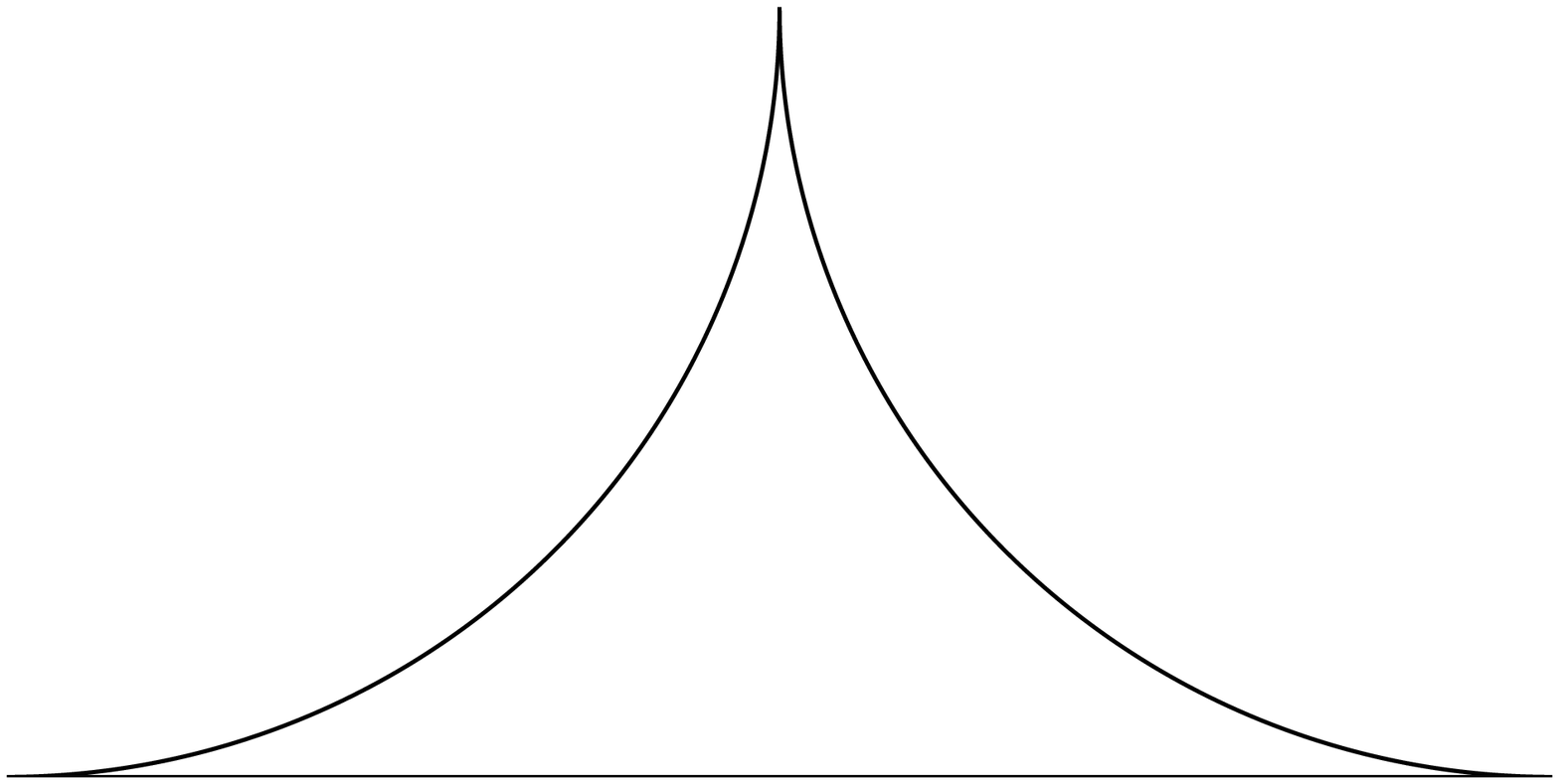} 
   \includegraphics[ width=.8 in]{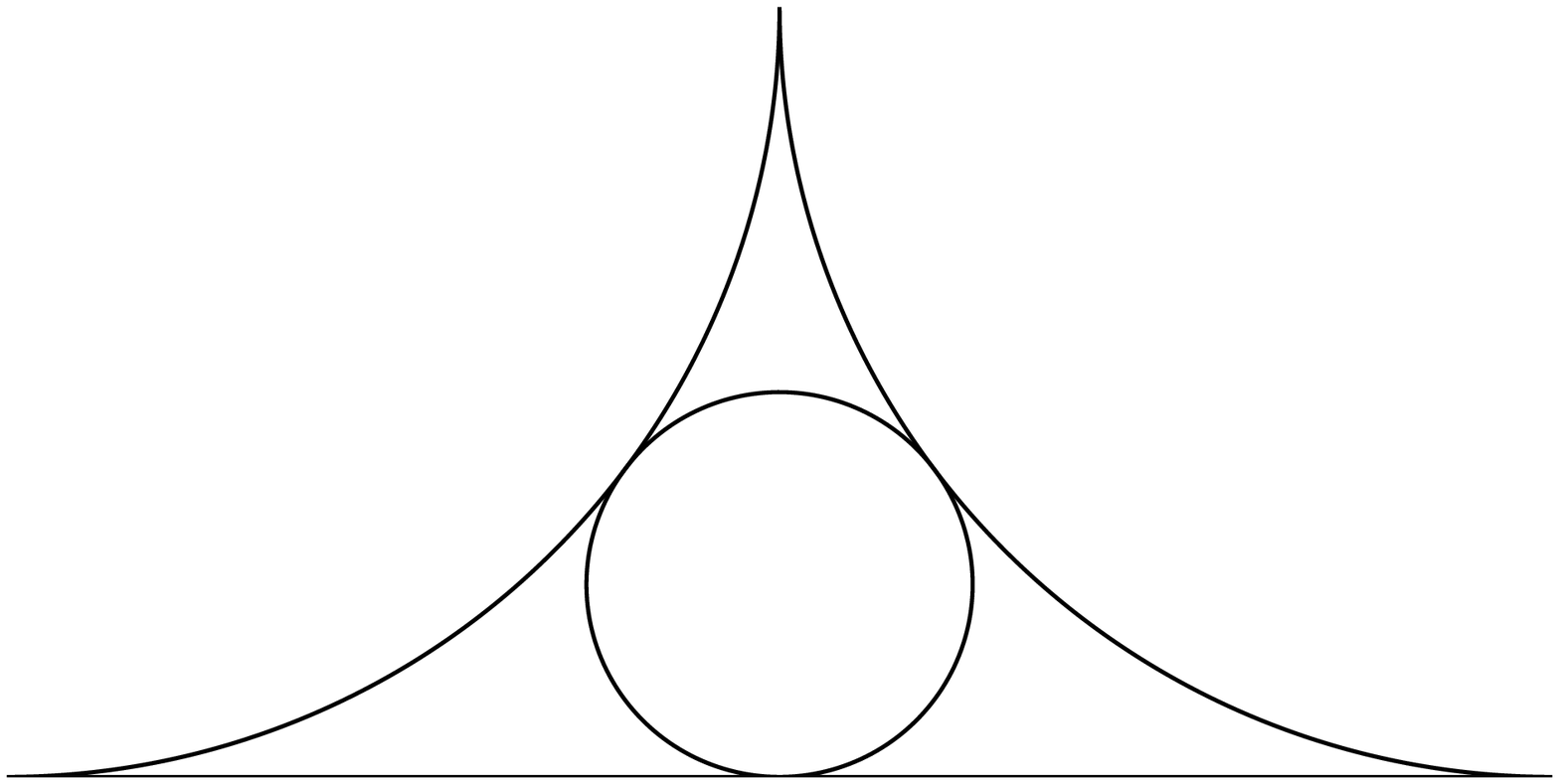} 
   \includegraphics[ width=.8 in]{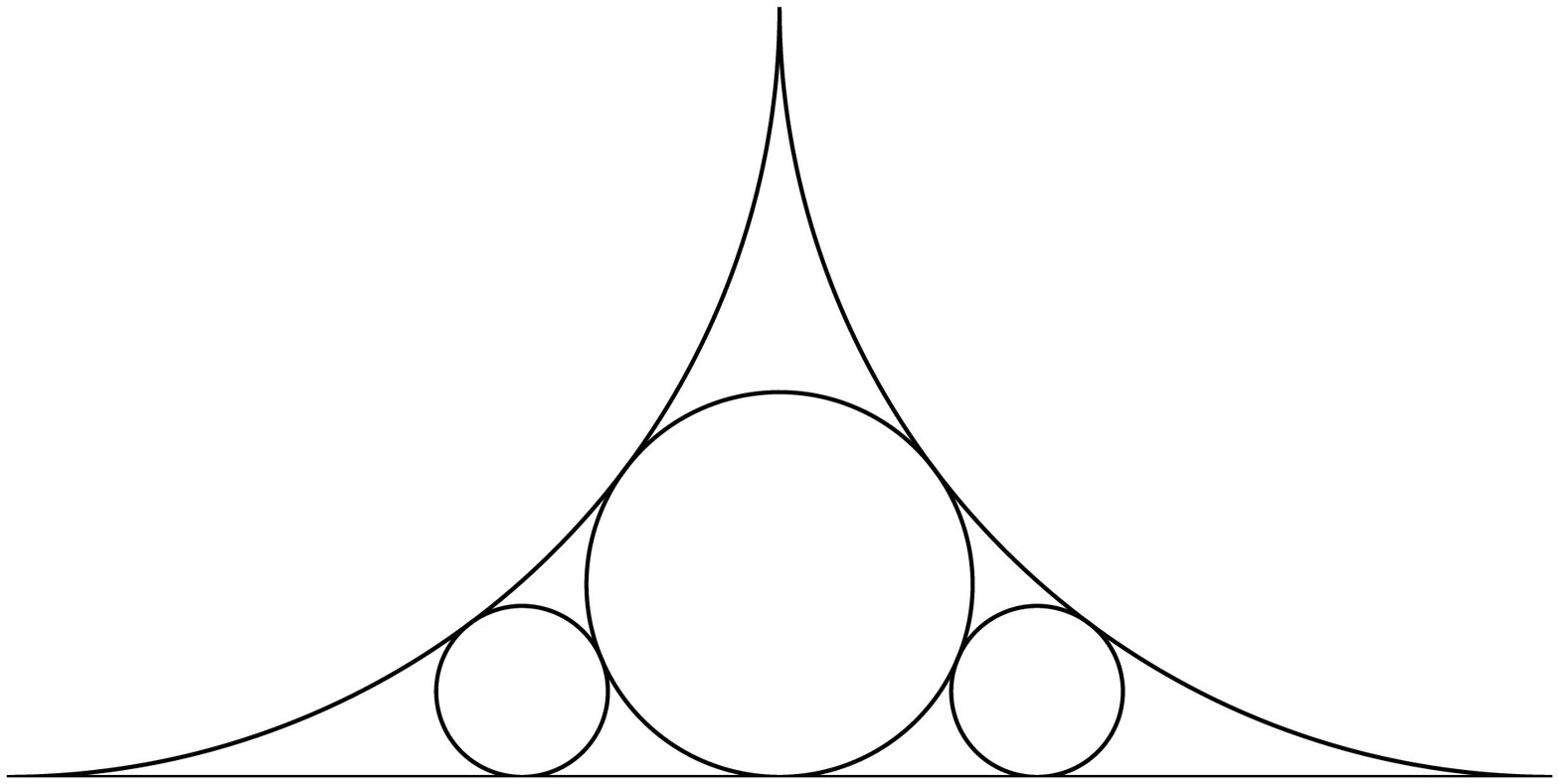}
    \includegraphics[ width=.8 in]{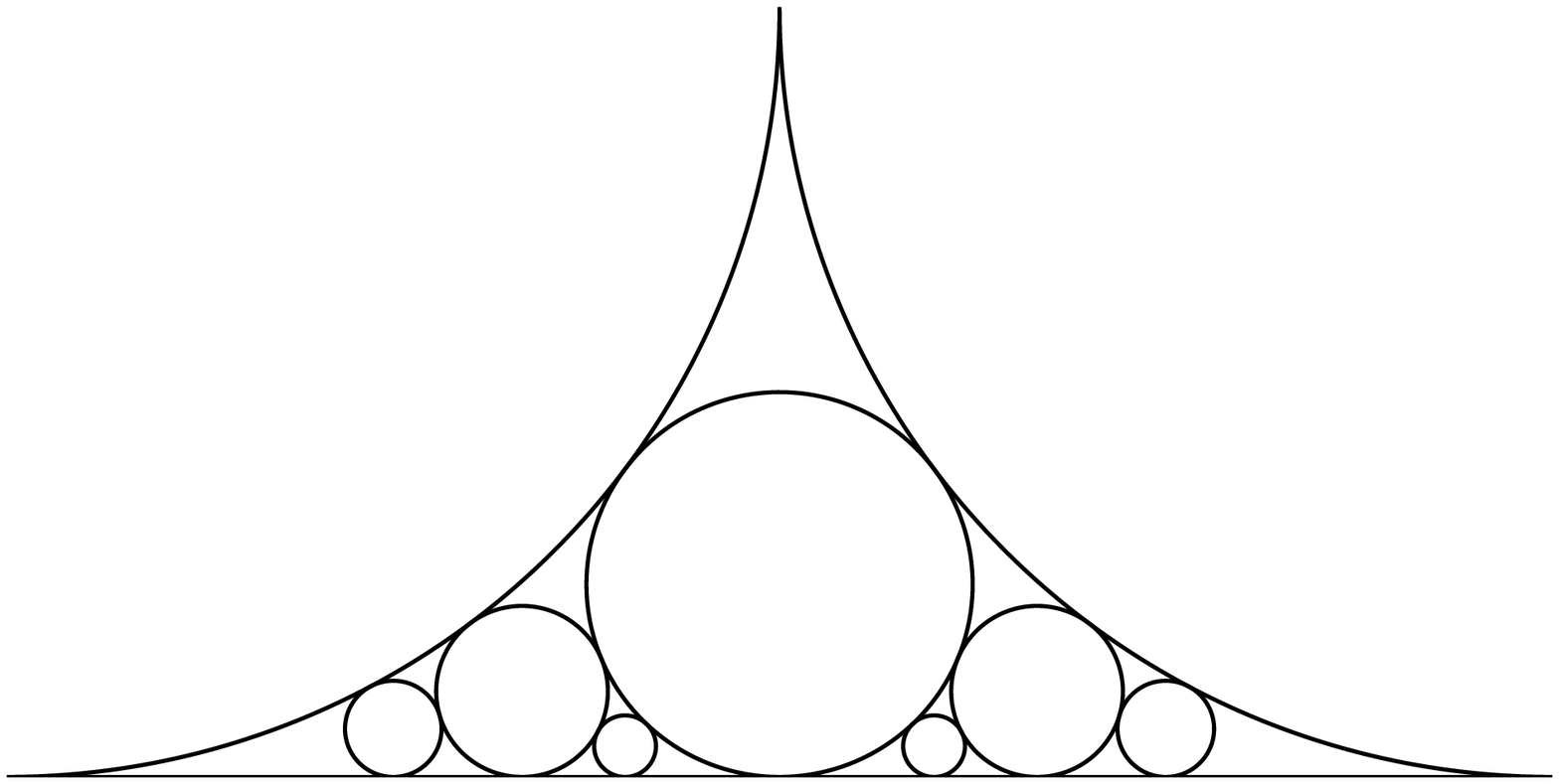} 
    }
        \centering\includegraphics[ width=1.6 in]{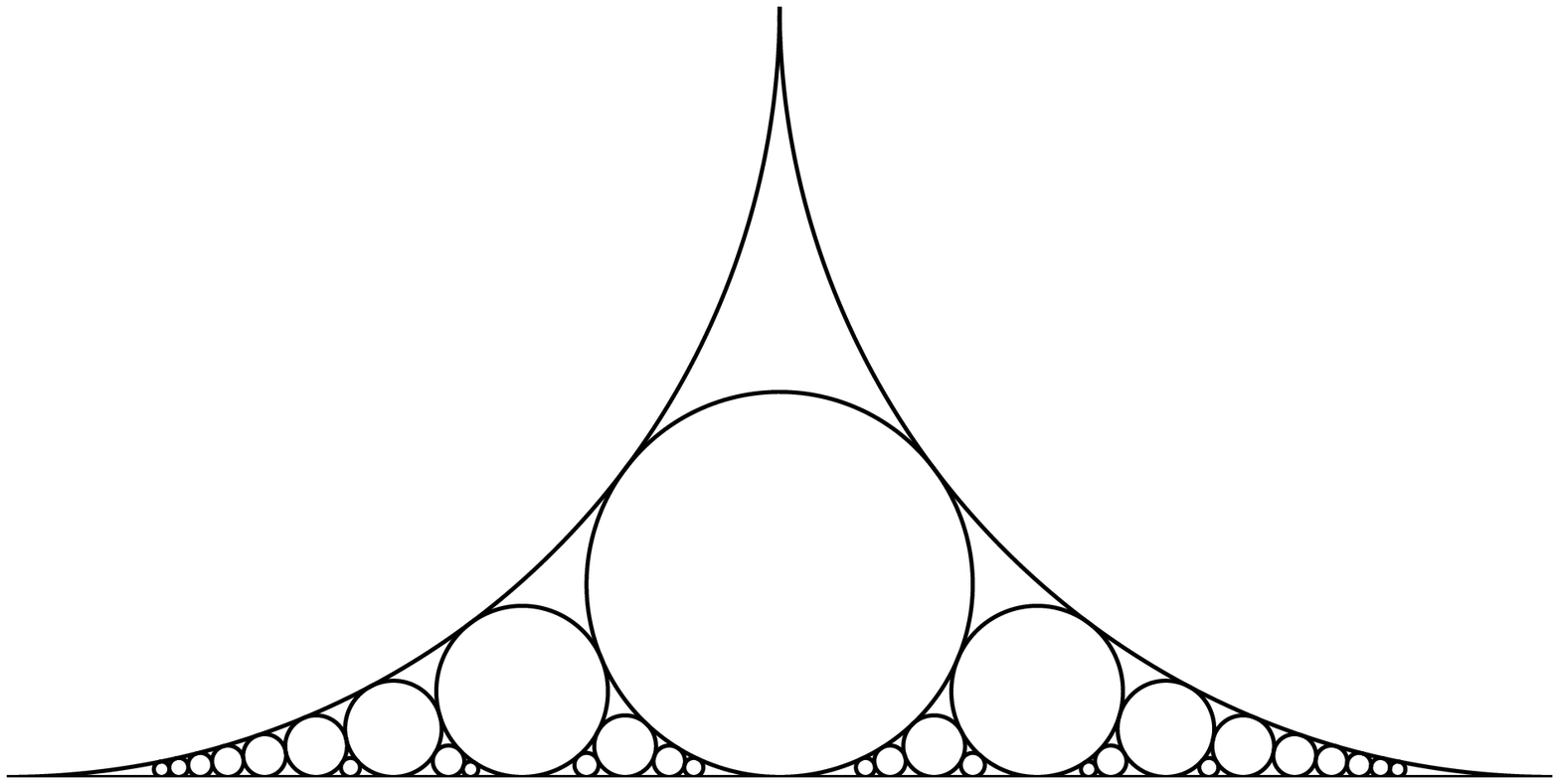} 
\end{figure}

A Ford circle is thus  a circle of the form $C_{a,b}$ where $a,b>0$ are relatively prime integers.   Clearly, any two Ford circles are either tangent to each other or do not touch at all.    Further each is tangent to the horizontal axis at a positive rational number and, in fact,  for every positive rational number $a/b$ in lowest terms there is a Ford circle $C_{a,b}$.

The locations of tangency for these circles are  

\centerline{$\frac01,\frac11$}

\centerline{$\frac01,\frac12, \frac 11$}

\centerline{$\frac01, \frac13, \frac12, \frac23, \frac 11$}

\centerline{$\frac01,\frac14, \frac13,\frac25, \frac12, \frac35, \frac23, \frac34, \frac 11$} \vskip.1 in
\noindent and Stern's sequence makes a new appearance:  	the locations of tangency of the circles in the $k$th iterate are
$a_n/a_{2^k+n},  n=0,\ldots, 2^k.$

Recall, with $r_n:=\frac{a_{n+1}}{a_n}$,  the function $F(x):=\max\left\{\dfrac x{1-x}, x-1\right\}$ is easily seen to satisfy $$F(r_n)=r_{\lfloor n/2\rfloor}$$ and so, by the Tree Theorem, the tree induced by $F$ on the positive rationals coincides with the tree with values $(r_n)$;   i.e., $(r_n)$ enumerates the positive rationals.

\subsection{Another circle packing, diatomic array, and diatomic sequence}
In this subsection, and with subsection 3.1 as a guide,
the array of circles $$C_{0,1}||C_{1,\sqrt 2}||C_{\sqrt 2,1}||C_{1,0}$$

 \begin{figure}[htbp] 
   \centering
   \includegraphics[width=2.3in]{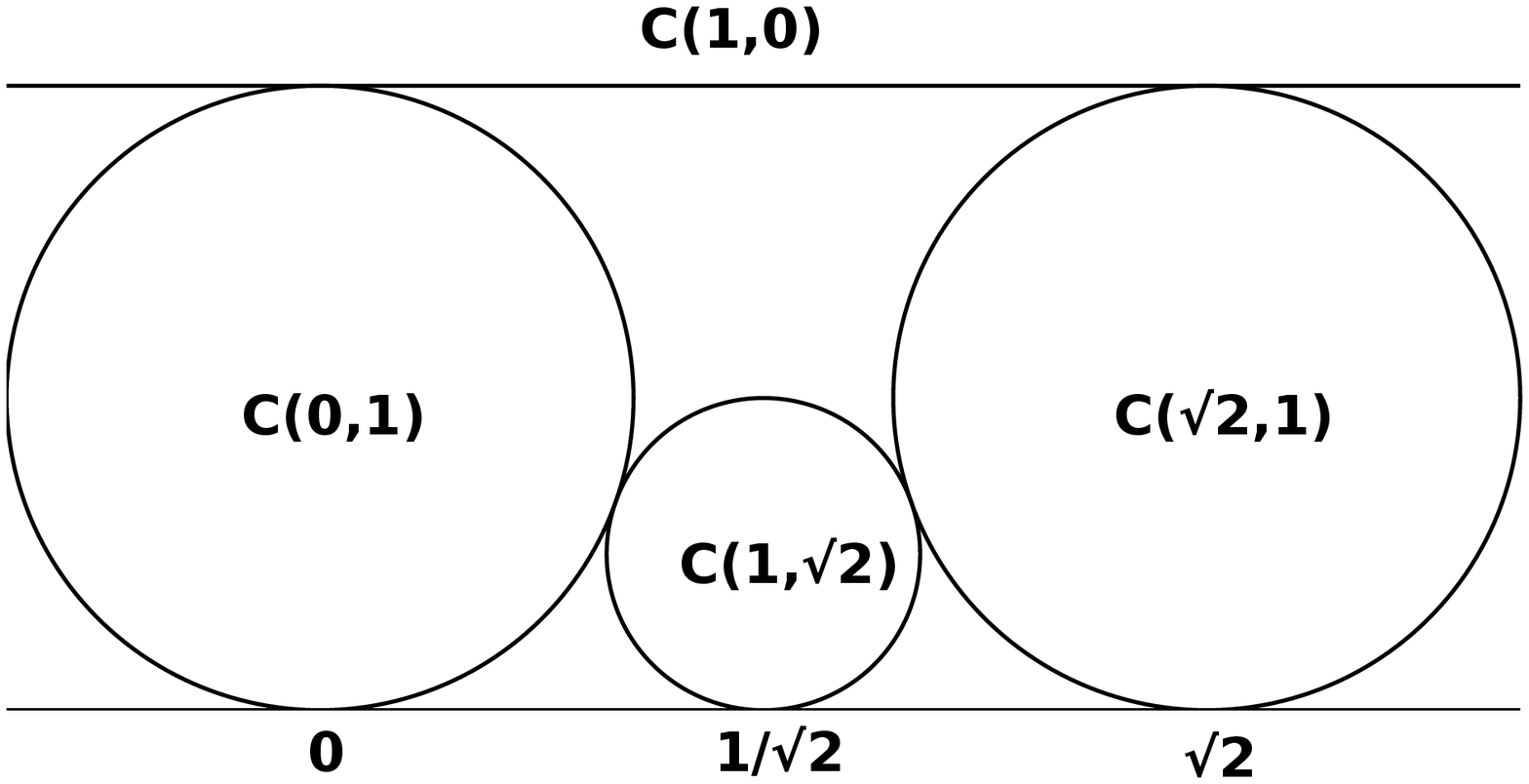}
  \end{figure}
 
   \noindent indicates the recursive process
   
   $$\begin{array}{cccc}
C_{a,b}&&& C_{c,d}\\
C_{a,b}&C_{a\sqrt 2+c, b\sqrt 2+d}&C_{a+c\sqrt 2, b+d\sqrt 2}&C_{b,d}\end{array}$$
\noindent which leads to a function 
$$F(x)=\max\left\{\frac x{1-x}, \frac{2x-2}{2-x}, x-2\right\}$$
that, by the Tree Theorem,  induces an enumeration of the postive rationals.  

Here are the details.  Guettler and Mallows 
proposed a new type of Ford circle in \cite{GM}.   Their form of Ford circles is different from ours since, for a given pair of tangent circles, they add two new tangent circles.  To ensure uniqueness, they do that according to the rule that the tangent points of the four circles with each other all lie on some circle.
\begin{figure}[htbp] 
   \centering
   \includegraphics[width=3in]{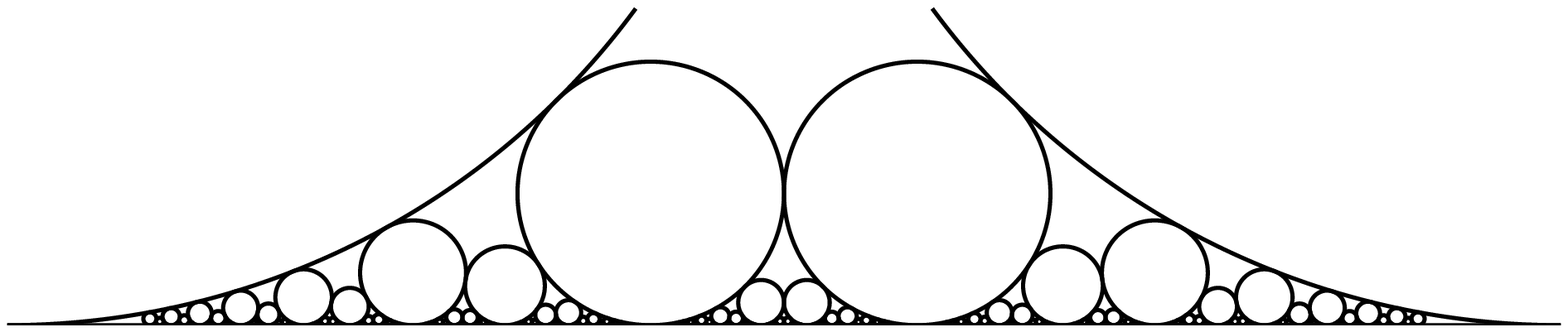} 
 \end{figure}

One can also construct this analogue of the set of Ford circles (with tangents between 0 and $\sqrt 2$)  by, starting with $C_{0,1/\sqrt 2}$ and $C_{\sqrt 2,1}$, recursively appending to every pair of tangent circles  $C_{a,b}, C_{c,d}$, the two other circles $C_{a+\sqrt 2 c,b+\sqrt 2 d}$ and $C_{\sqrt 2  a+c, \sqrt 2 b+d}$.   
 
 The first and second coordinates of these circles satisfy a recursion defined by 
$$\begin{array}{cccccccc}a&&&b&&&a
\\a&a\sqrt 2+b&a+b\sqrt 2&b&b\sqrt2+a&b+a\sqrt2&a\end{array}$$
and we get another diatomic array:

$$\begin{array}{ccccccccccccccccccc}
1&&&&&&&&&&&&&&&&&&1\\
1&&&&&&&&&\sqrt 2&&&&&&&&&1\\
1&&&2\sqrt 2&&&3&&&\sqrt 2&&&3&&&2\sqrt2&&&1\\
1&3\sqrt 2&5&2\sqrt 2&7&5\sqrt 2&3&4\sqrt 2&5&\sqrt 2&5&4\sqrt2&3&5\sqrt2&7&2\sqrt2&5&3\sqrt2&1\\
.&.&.&.&.&.&.&.&.&.&.&.&.&.&.&.&.&.&.\end{array}$$
and another diatomic sequence (formed, again, by deleting the right-most column of 1s):
$$b_n:  0, 1, \sqrt 2, 1,2\sqrt 2,3, \sqrt 2,3, 2\sqrt 2,1,3\sqrt 2,5,2\sqrt 2,7, 5\sqrt 2,3,\ldots$$
Its recursive definition is
$$\begin{cases} b_{3n}&:=b_n,\\ b_{3n+1}&:=\sqrt 2\cdot b_n+b_{n+1}\\
 b_{3n+2}&:=b_n+\sqrt 2\cdot b_{n+1}.\end{cases}\eqno{(12)}$$ 

As with Stern's sequence, this sequence satisfies a 3-term recurrence (Th. 18 of \cite{N2}), namely
$$b_{n+1}=\sqrt2 b_n+b_{n-1}-2(b_{n-1}\text{ mod }\sqrt2 b_n).\eqno{(13)}$$
Hence, for $s_n:=\sqrt 2 b_{n+1}/b_n$,
$$s_{n+1}=2+\dfrac2{s_n}-4\left\{\dfrac1{s_n}\right\}.\eqno{(14)}$$

\begin{theorem}  Every positive rational is of the form $s_n$ for some $n$.\end{theorem}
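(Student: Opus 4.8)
The plan is to copy, \emph{mutatis mutandis}, the argument that closed subsection 3.1 for $(r_n)$: exhibit a self-map $F$ of the positive rationals for which $F(s_n)=s_{\lfloor n/3\rfloor}$, check the hypotheses of the Tree Theorem, and conclude that the index tree and the value tree $(s_n)$ coincide, so that $(s_n)$ sweeps out every positive rational. The natural candidate is the very $F$ produced by the circle recursion, namely $F(x)=\max\{x/(1-x),\,(2x-2)/(2-x),\,x-2\}$, with the root being the index $0$ node, $s_0:=\infty$.

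First I would translate the base-$3$ recursion (12) into relations among the $s_n$. Writing $s_n=\sqrt2\,b_{n+1}/b_n$ and substituting (12) should give, after routine simplification,
$$s_{3n}=s_n+2,\qquad s_{3n+1}=\frac{2(1+s_n)}{2+s_n},\qquad s_{3n+2}=\frac{s_n}{1+s_n}.$$
These are exactly the three inverse branches of $F$ (invert $x\mapsto x-2$, $x\mapsto(2x-2)/(2-x)$, and $x\mapsto x/(1-x)$), and in particular they show inductively that each $s_n\in\Q^+$, starting from $s_1=2,\ s_2=1$. Because $s_n>0$, the three formulas place $s_{3n}\in(2,\infty)$, $s_{3n+1}\in(1,2)$, and $s_{3n+2}\in(0,1)$, so the residues mod $3$ land in the three disjoint intervals $(0,1)$, $(1,2)$, $(2,\infty)$ that partition $\Q^+$. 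A one-line sign check then shows that on each interval exactly one argument of the maximum is positive — $x/(1-x)$ on $(0,1)$, $(2x-2)/(2-x)$ on $(1,2)$, and $x-2$ on $(2,\infty)$ — so there $F$ equals the corresponding M\"obius branch; composing with the inverse relations yields $F(s_n)=s_{\lfloor n/3\rfloor}$. The same interval picture shows every $x'\in\Q^+$ has exactly one $F$-preimage in each interval, so the full preimage of $s_m$ is precisely $\{s_{3m},s_{3m+1},s_{3m+2}\}$.

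To invoke the Tree Theorem I would set $S=\Q^+\cup\{\infty\}$, $S_0=\{\infty\}$ (noting $F(1)=F(2)=\infty$), and take $\Phi(p/q)=p+q$ in lowest terms, extended by $\Phi(\infty)=1$. The required decrease $\Phi(F(x))<\Phi(x)$ is then checked interval by interval: on $(0,1)$, $F(p/q)=p/(q-p)$ has $\Phi=q$; on $(2,\infty)$, $F(p/q)=(p-2q)/q$ has $\Phi=p-q$; and on $(1,2)$, $F(p/q)=(2p-2q)/(2q-p)$, whose numerator and denominator sum to $p$ before reduction, so $\Phi(F(x))\le p$. In every case $\Phi(F(x))<p+q=\Phi(x)$, and when $F(x)=\infty$ one has $\Phi(\infty)=1<\Phi(x)$. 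Hence $F$ arranges $S$ on a rooted tree with root $\infty$.

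Surjectivity — every positive rational is some $s_n$ — then follows by strong induction on $\Phi$: the root's two finite children are $1=s_2$ and $2=s_1$, while for any other $x\in\Q^+$ the element $F(x)$ has strictly smaller $\Phi$, hence equals some $s_m$ by induction, so $x$, being one of the three $F$-preimages of $s_m$, is $s_{3m}$, $s_{3m+1}$, or $s_{3m+2}$. I expect the only genuinely delicate part to be the branch-selection bookkeeping in the middle paragraph — confirming that the maximum picks out the intended M\"obius map on each of $(0,1),(1,2),(2,\infty)$, together with the harmless factor-of-two reduction on the $(1,2)$ branch — and the mild nuisance that, unlike the single finite root for $(r_n)$, the tree here is rooted at $\infty$ with \emph{two} finite children $s_1=2$ and $s_2=1$.
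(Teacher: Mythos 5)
Your proof is correct and takes essentially the same route as the paper: the same map $F(x)=\max\left\{\frac{x}{1-x},\frac{2x-2}{2-x},x-2\right\}$, the same $\Phi(p/q)=p+q$, the relations (15) giving $F(s_n)=s_{\lfloor n/3\rfloor}$, and the Tree Theorem to conclude. The only differences are cosmetic bookkeeping --- you root a single tree at $\infty$ with $F(1)=F(2)=\infty$, whereas the paper takes $S_0=\{1,2\}$ and gets two rooted trees --- plus the fact that you spell out the interval-by-interval branch selection and $\Phi$-decrease checks that the paper dismisses as ``easy to verify.''
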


\begin{proof}  Let $S:={\mathbb Q}^+$, $S_0:=\{1,2\}$, and define $F: S\rightarrow S$ by, for all $x\in S-S_0$,
$$F(x)=\max\left\{\frac x{1-x}, \frac{2x-2}{2-x}, x-2\right\}.$$
With, as before, $\Phi(a/b):=a+b$ when $a/b$ is in lowest terms, it is easy to verify that 
$\Phi(F(x))<\Phi(x)$
for all $x\in S-S_0$.  By the Tree Theorem, the positive rationals can be arranged in two rooted trees (here we identify $(a,b)$ with $\frac ba$):

\Tree[.(1,2) [.(1,4) ] [.(2,3) ]  [.(3,2) ] ].(1,2)
\Tree[.(1,1) [.(1,3) ] [.(3,4) ]  [.(2,1) ] ].(1,1)

By the definition of $(s_n)$, we can verify
$$\begin{cases}   s_{3n}&=s_n+2\\
s_{3n+1}&=\frac{2s_n+2}{s_n+2}\\
s_{3n+2}&=\frac{s_n}{1+s_n} \end{cases}\eqno{(15)}$$
and so, for all $n\ge 1$,
$$F(s_n)=s_{\lfloor n/3\rfloor}.$$
Hence every $s_n$ appears exactly once on the pair of trees and thus
$n\mapsto s_n$ is an enumeration of the positive rationals.  
  \end{proof}

\begin{corollary}
The iterates of $2+\frac 2x-4\left\{\frac1x\right\}$, starting at 2, span the entire set of positive rational numbers. \end{corollary}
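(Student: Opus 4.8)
The plan is to read this corollary as a purely dynamical restatement of the theorem just proved, so that almost no new work is required; the only things to check are that the map in the statement is exactly the one-step recursion for $(s_n)$ and that the indicated starting point $2$ is in fact $s_1$.

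First I would record that equation (14) says precisely that $s_{n+1} = g(s_n)$, where $g(x) := 2 + \frac{2}{x} - 4\left\{\frac{1}{x}\right\}$ is the function appearing in the corollary. Consequently the forward orbit of any term $s_N$ under $g$ is just the tail $s_N, s_{N+1}, s_{N+2}, \dots$ of the sequence. I would then pin down the initial value: since $s_n = \sqrt{2}\, b_{n+1}/b_n$ and the diatomic sequence begins $b_0 = 0$, $b_1 = 1$, $b_2 = \sqrt{2}$, we get $s_1 = \sqrt{2}\, b_2/b_1 = \sqrt{2}\cdot\sqrt{2} = 2$. Hence the set of iterates $\{\,g^{(k)}(2) : k \ge 0\,\}$ coincides exactly with $\{\,s_n : n \ge 1\,\}$.

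To finish, I would invoke the preceding theorem, which states that every positive rational equals $s_n$ for some $n$ (equivalently, that $n \mapsto s_n$ enumerates $\Q^+$). This yields $\{\,s_n : n \ge 1\,\} = \Q^+$, and together with the previous paragraph it shows that iterating $g$ from $2$ sweeps out all of $\Q^+$, as claimed. I do not anticipate a genuine obstacle here: the substantive content already lives in the theorem, and the only care required is bookkeeping — confirming $s_1 = 2$ so that the orbit begins at the correct place, and noting that every $s_n$ is in fact rational, which the theorem already guarantees.
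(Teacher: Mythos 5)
Your proof is correct and is essentially the paper's intended argument: the paper states this corollary without a separate proof, treating it as immediate from the theorem that every positive rational equals some $s_n$, combined with the recursion $s_{n+1}=2+\frac{2}{s_n}-4\left\{\frac{1}{s_n}\right\}$ of equation (14). Your only additions are the explicit checks that $s_1=\sqrt{2}\,b_2/b_1=2$ and that the orbit of $2$ under the map is exactly $\{s_n:n\ge 1\}$, which is precisely the bookkeeping the paper leaves implicit.
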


\noindent{\bf Note.} This result  appeared \cite{monthly} and \cite{soln} as a problem and solution in the American Mathematical Monthly.  The sequence $(b_n)$ first appeared in \cite{N2} where most of the material in this section also appeared as well.

\subsection{Another example}
Here we have another array of circles:
$$C_{0,1}||C_{1,\sqrt 3}||C_{\sqrt 3, 2}||C_{2,\sqrt 3}||C_{\sqrt 3,1}||C_{1,0}$$
 \begin{figure}[htbp] 
   \centering
   \includegraphics[width=3.0in]{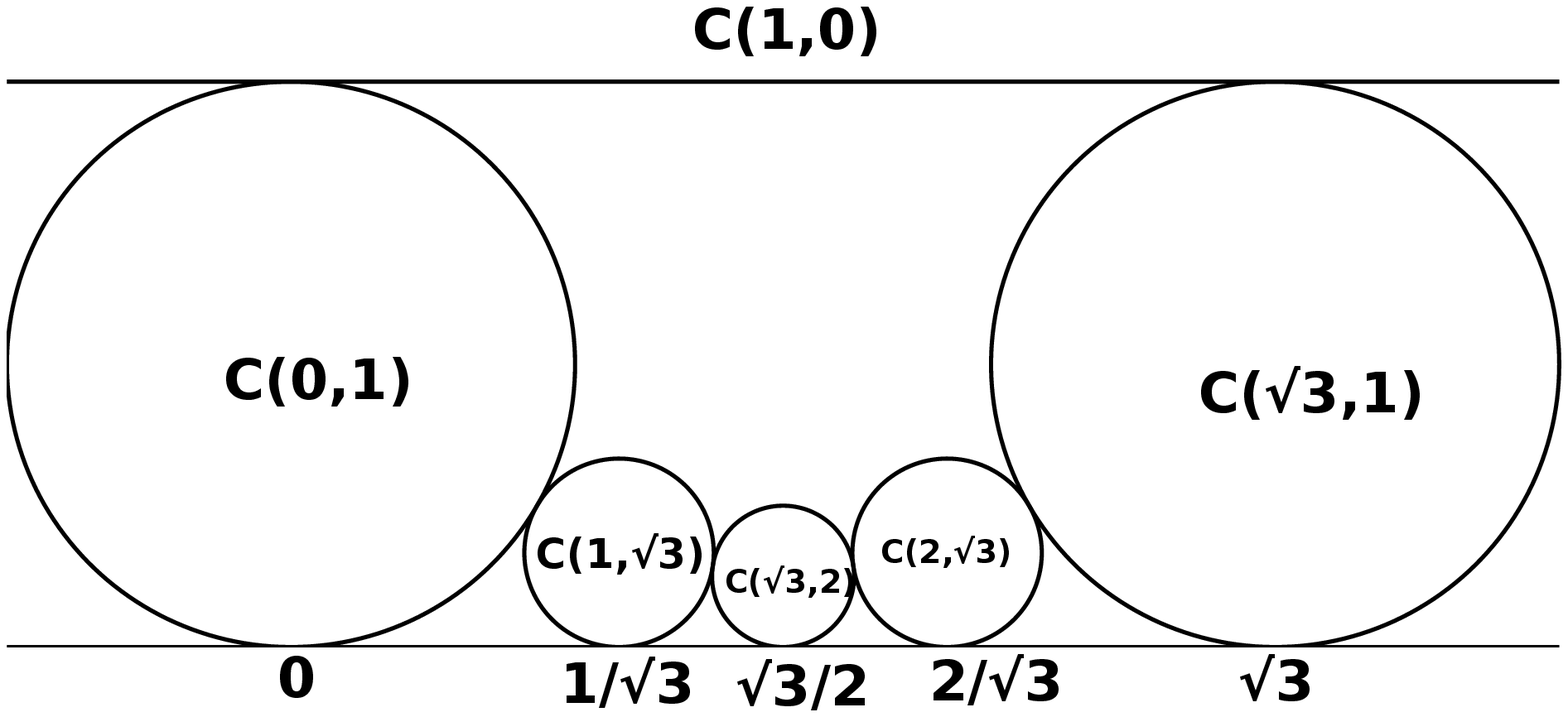}
  \end{figure}
  
\noindent giving rise to the recursive process
$$\begin{array}{cccccc} C_{a,b}&&&&&C_{c,d}\\
C_{a,b}&C_{a+c\sqrt 3, b+d\sqrt 3}&C_{a\sqrt 3+2c, b\sqrt 3+2d}&C_{2a+c\sqrt3, 2b+d\sqrt 3}&C_{a\sqrt 3+c, b\sqrt 3+d}&C_{b,d}\end{array}$$
which leads to 
 $$F(x)=\max\left\{\frac x{1-x}, \frac{3x-3}{3-2x}, \frac{2x-3}{2-x},\frac{3x-6}{3-x}, x-3\right\},$$ 
and thus to another enumeration of rationals.   

Here are some details;   our analogue of Stern's sequence here is
$$\begin{cases} c_{5n}&=c_n,\\ c_{5n+1}&=\sqrt 3\cdot c_n+c_{n+1}\\
 c_{5n+2}&=2c_n+\sqrt 3c_{n+1}\\
 c_{5n+3}&=\sqrt 3c_n+2c_{n+1}\\
c_{5n+4}&=c_n+\sqrt 3c_{n+1}\\
 \end{cases}\eqno{(16)}$$ 
 Let $$t_n:=\dfrac{\sqrt 3 c_{n+1}}{c_n}.$$
\begin{theorem} Every positive rational is of the form $t_n$ for some $n$.\end{theorem}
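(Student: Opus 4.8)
The plan is to closely imitate the proof of the preceding theorem, replacing the base-$3$ recursion by the base-$5$ recursion (16) and enlarging the set of roots to four elements. First I would set $S:=\Q^+$, take $S_0:=\{1,\tfrac32,2,3\}$, and define $F:S-S_0\to S$ by the stated maximum. With $\Phi(a/b):=a+b$ for $a/b$ in lowest terms, the first task is to check that $\Phi(F(x))<\Phi(x)$ for every $x\notin S_0$. The four points of $S_0$ cut $(0,\infty)$ into the five open intervals $(0,1)$, $(1,\tfrac32)$, $(\tfrac32,2)$, $(2,3)$, $(3,\infty)$, and on each of these $F$ agrees with exactly one of its five branches; since every branch is a M\"obius map with integer coefficients and nonzero determinant it sends $\Q^+$ into $\Q^+$, and a short computation on each interval (e.g. on $(3,\infty)$, $F(x)=x-3$ sends $a/b$ to $(a-3b)/b$, lowering $\Phi$ by $3b$) shows the selected branch strictly lowers $\Phi$. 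By the Tree Theorem this arranges $\Q^+$ on four rooted trees with roots $1,\tfrac32,2,3$.

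Next I would record the forward (child) recursions for $t_n=\sqrt3\,c_{n+1}/c_n$. Substituting (16) and rewriting everything in terms of $t_n$ (using $c_{n+1}/c_n=t_n/\sqrt3$) is expected to yield
$$\begin{cases} t_{5n}=t_n+3\\ t_{5n+1}=\dfrac{3t_n+6}{t_n+3}\\ t_{5n+2}=\dfrac{2t_n+3}{t_n+2}\\ t_{5n+3}=\dfrac{3t_n+3}{2t_n+3}\\ t_{5n+4}=\dfrac{t_n}{1+t_n}.\end{cases}$$
These are the analogue of (15); the computation is routine and parallels the derivation of (15) from (12).

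The heart of the matter is to show $F(t_n)=t_{\lfloor n/5\rfloor}$ for all $n$ with $t_n\notin S_0$. Inverting each of the five recursions above returns exactly the five branches of $F$: solving $x=t_n+3$, $x=\frac{3t_n+6}{t_n+3}$, $x=\frac{2t_n+3}{t_n+2}$, $x=\frac{3t_n+3}{2t_n+3}$, $x=\frac{t_n}{1+t_n}$ for $t_n$ gives $x-3$, $\frac{3x-6}{3-x}$, $\frac{2x-3}{2-x}$, $\frac{3x-3}{3-2x}$, $\frac{x}{1-x}$ respectively. Each child map is increasing in $t_n$, so $t_{5m+j}$ lands in the interval listed above for that value of $j$, and on that interval one checks that the matching inverse branch is the largest of the five expressions (the others being negative or strictly smaller), so $F$ recovers the parent $t_m$. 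The main obstacle is precisely this last bookkeeping: confirming, interval by interval, that the maximum defining $F$ is attained by the correct branch. Once this is in hand, $F(t_n)=t_{\lfloor n/5\rfloor}$ shows that $n\mapsto t_n$ reproduces the canonical labeling of the four trees (with $t_1,t_2,t_3,t_4$ the roots $3,2,\tfrac32,1$), whence each positive rational equals $t_n$ for exactly one $n$ and $n\mapsto t_n$ enumerates $\Q^+$.
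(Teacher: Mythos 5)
Your proposal is correct and follows essentially the same route as the paper, whose proof of this theorem simply invokes the Tree Theorem ``as above'' with the same $F$, the fixed-point set $S_0=\{1,\tfrac32,2,3\}$, and $\Phi(a/b)=a+b$, exactly as in the preceding theorem for $(s_n)$. Your filled-in details --- the five child recursions derived from (16), their inverses matching the five branches of $F$, and the interval-by-interval check that the positive branch is the maximum, giving $F(t_n)=t_{\lfloor n/5\rfloor}$ --- are all accurate and are precisely the steps the paper's ``as above'' leaves implicit.
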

 \begin{proof} As above, we can use the Tree Theorem with $$F(x)=\max\left\{\frac x{1-x}, \frac{3x-3}{3-2x}, \frac{2x-3}{2-x},\frac{3x-6}{3-x}, x-3\right\},$$ 
 fixed points $S_0=\{1,\frac32,2,3\}$,  and  $\Phi$ as before.\end{proof}

As above, we can find a recursive formula:  $t_0=\infty$ and 
$$t_{n+1}=3\left(2\left\lfloor\dfrac1{t_n}\right\rfloor+1-\dfrac1{t_n}\right)$$
and we have the semi-recursive formula
$$t_n=3\left(2\nu_5(n)+1-\dfrac1{t_{n-1}}\right).$$

\subsection{A limit to generality}
The circular arrays are all examples of  ``necklaces":   the points of tangency are all on a circle.   In particular,  the iterates of the M\"obius transformation $$\begin{pmatrix}0&1\\-1&\alpha\end{pmatrix}(x):=\dfrac 1{\alpha-x}$$
are periodic for $\alpha=1, \sqrt 2, \phi \text{ (the golden ratio $\frac{\sqrt 5+1}2$)}$, and $\sqrt 3$ (we have necklaces of length 3,4,5, and 6 respectively).

\begin{figure}[htbp] 
   \centering
   \includegraphics[width=1.0in]{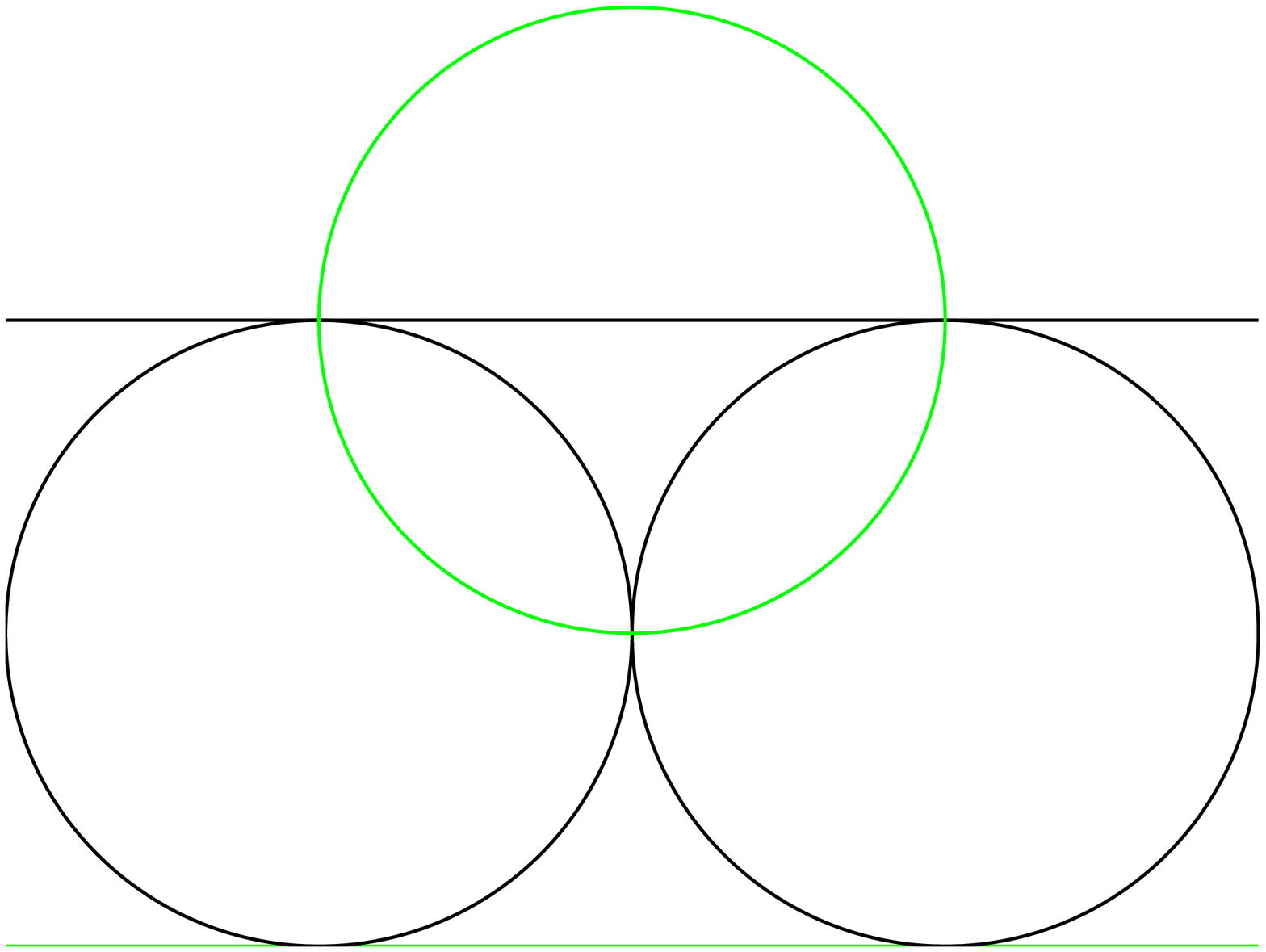}\hskip .5 in
   \includegraphics[width=1.0in]{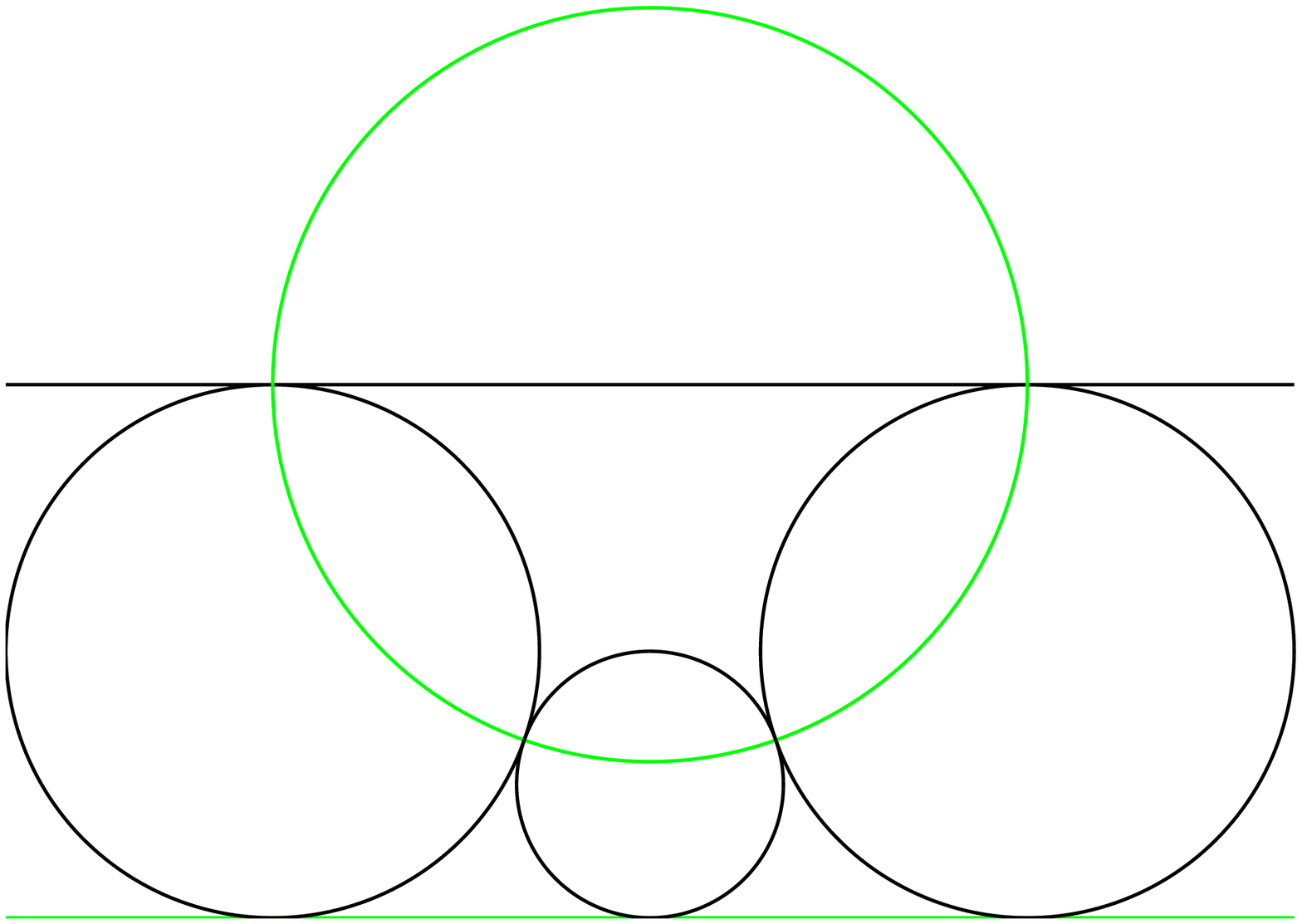}
  \end{figure} 
  
  \begin{figure}[htbp] 
   \centering
   \includegraphics[width=1.0in]{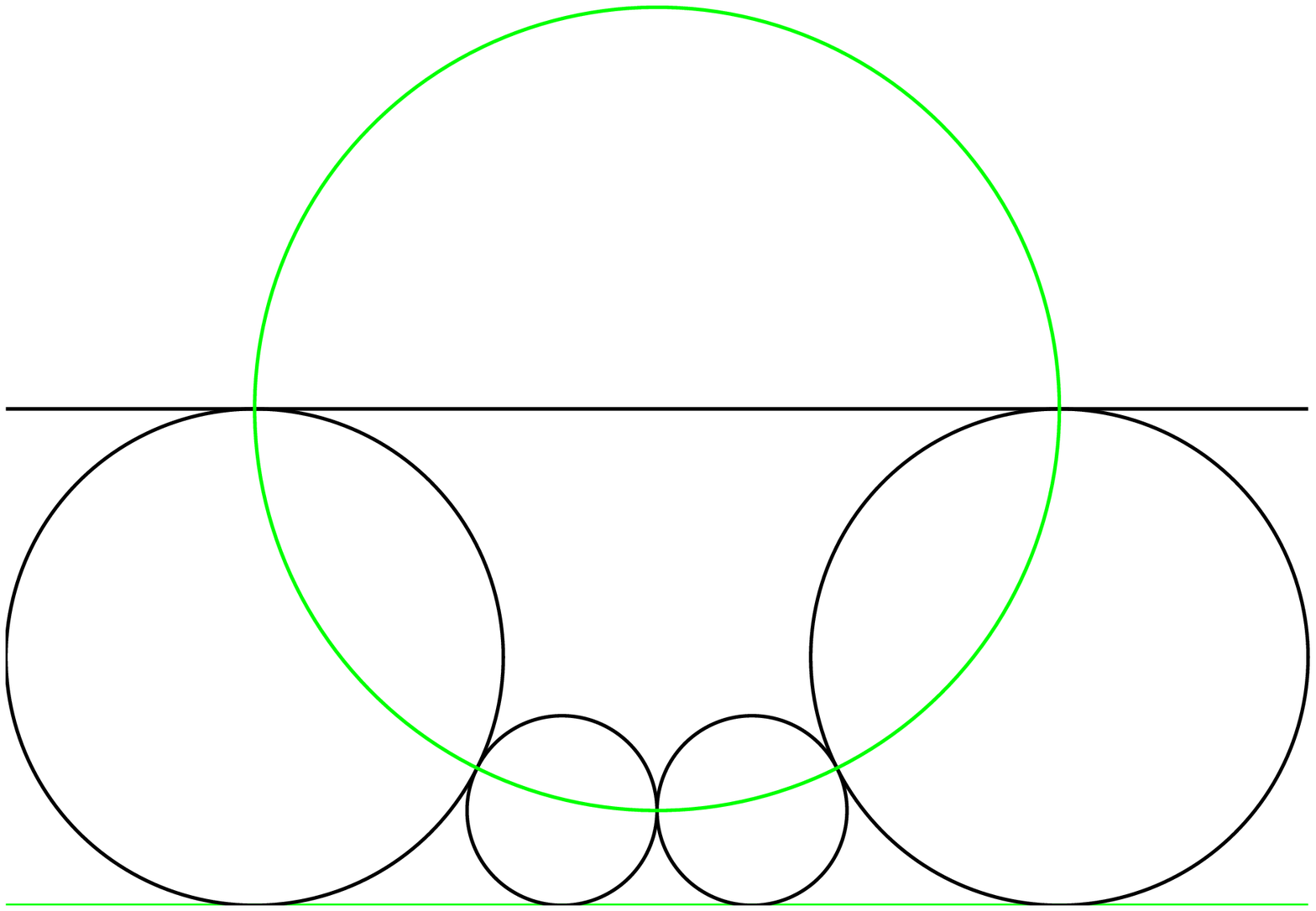}\hskip .5 in
   \includegraphics[width=1.0in]{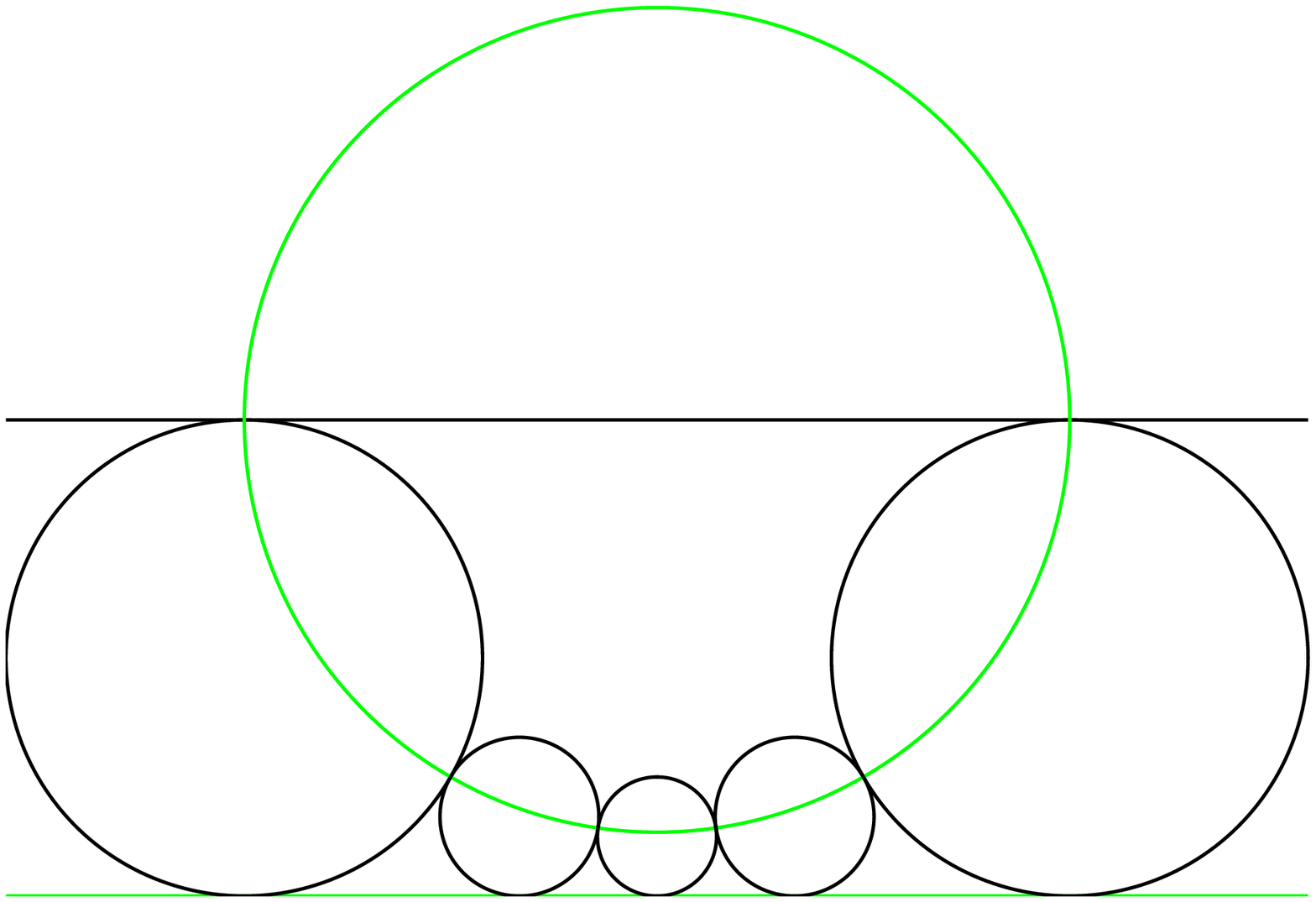}
  \end{figure}

For a real $\alpha$,  we can start a chain:
$$C(0,1)||C(1,\alpha)||C(\alpha, \alpha^2-1)||C(\alpha^2-1,\alpha^3-2\alpha)||\ldots$$
This is a sequence $C(p_{n-1}(\alpha), p_n(\alpha))$ where $(p_n)$ is a sequence of polynomials that satisfy a recurrence similar to those of Chebyshev polynomials and, in fact, $p_n(x)=U_n(x/2)$ where $U_n$ are Chebyshev polynomials of the second kind.  Furthermore, if $\alpha$ is the largest root of $p_N$ for some $N$, then, and only then, the chain terminates with $C(1,0)$.   Such necklaces give rise to an enumeration of the positive rationals via the recursion
$$x_{n+1}=\alpha^2\left(2\left\lfloor\dfrac1{x_n}\right\rfloor+1-\dfrac1{x_n}\right)$$
only when $\alpha^2$ is rational and thus $U_N$ must have largest irreducible factor of degree at most 2.
It is known \cite{Lehmer} that $U_n$ has largest irreducible factor of degree $\phi(2n+2)/2$ where $\phi(n)$ is the Euler totient function.   Starting with $n=1$, the sequence $\phi(2n+2)/2$ goes   $1,1,2,2,2,3,4,3,4,5,\ldots$;  the only quadratic or linear cases correspond to $n=1,2,3,4,5$.   The cases for $n=2,3,5$ are spoken for above, leaving the $n=4$ as the last quadratic case.  

The three enumerations of rationals we have found are the same as those Ponton \cite{P} found, independently, by using analogues of Calkin-Wilf trees.   Ponton further concludes, as did we, that these are the only three possibilities.  

\subsection{The $k=4$ case}
Based on the array
 \begin{figure}[htbp] 
   \centering
   \includegraphics[width=3 in]{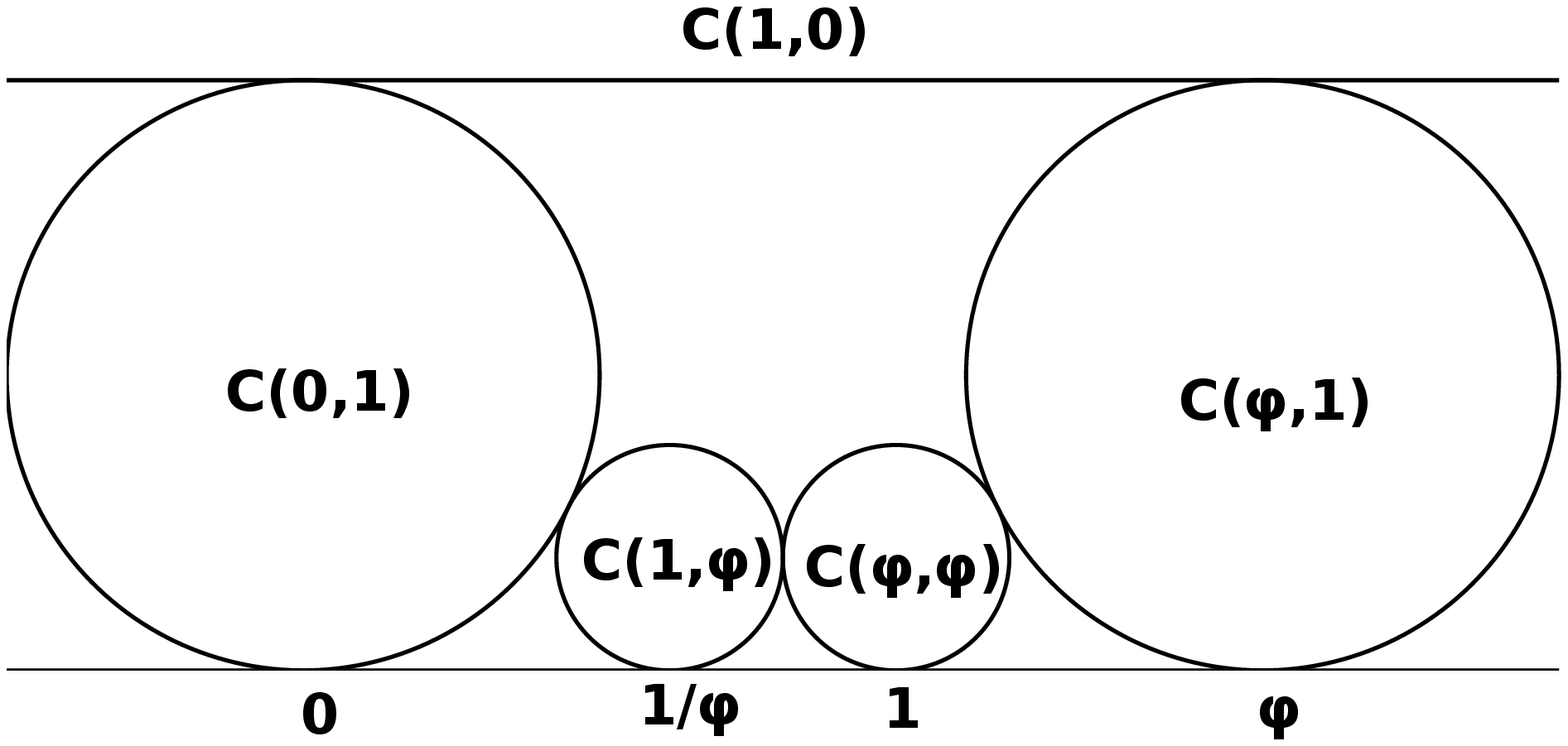}
    \end{figure}
    
\noindent we have the necklace
 $$C_{0,1}||C_{1,\phi}||C_{\phi,\phi}||C_{\phi,1}||C_{1,0}$$
 which indicates the recursive process
 $$\begin{array}{ccccc}
  C_{a,b}&&&&C_{c,d}\\
C_{a,b}&C_{a\phi+c, b\phi+d}&C_{a\phi+c\phi, b\phi+d\phi}&C_{a+c\phi, b+d\phi}&C_{c,d}\end{array}$$
 
\noindent and the analogue of Stern's sequence $d_0=0, d_1=1$ and, for $n\ge 0$

$$\begin{cases} d_{4n}&=d_n\\ d_{4n+1}&=\phi d_n+d_{n+1}\\
 d_{4n+2}&=\phi d_n+\phi d_{n+1}\\
 d_{4n+3}&=d_n+\phi d_{n+1}
 \end{cases}\eqno{(17)}$$ 
 The sequence $(d_n)$
begins

$1, \phi, \phi, 1, 2\phi, 1+2\phi, 2+\phi, \phi, 1+2\phi, 2+2\phi, 1+2\phi, \phi, 2+\phi, 1+2\phi, 2\phi, 1, 3\phi, 2+3\phi, 3+2\phi, 2\phi, 3+4\phi, 4+5\phi, 2+5\phi, 1+2\phi, 4+4\phi, 3+6\phi, 2+5\phi, 2+\phi, 1+4\phi, 2+4\phi, 3+2\phi, \phi, 2+3\phi, 3+4\phi, 2+4\phi, 1+2\phi, 4+5\phi, 4+7\phi, 3+6\phi, 2+2\phi, 3+6\phi, 4+7\phi, 4+5\phi, 1+2\phi, 2+4\phi, 3+4\phi, 2+3\phi, \phi, 3+2\phi, 2+4\phi, 1+4\phi, 2+\phi, 2+5\phi, 3+6\phi, 4+4\phi, 1+2\phi, 2+5\phi, 4+5\phi, 3+4\phi, 2\phi, 3+2\phi, 2+3\phi, 3\phi, 1, 4\phi, 3+4\phi, 4+3\phi, 3\phi, 5+6\phi, 6+8\phi, 3+8\phi, 2+3\phi, 6+7\phi, 5+10\phi, 4+8\phi, 3+2\phi, 2+7\phi, 4+7\phi, 5+4\phi, 2\phi, 5+6\phi, 6+9\phi, 4+9\phi, 3+4\phi, 8+12\phi, 9+16\phi, 8+13\phi, 4+5\phi, 7+14\phi, 10+16\phi, 9+12\phi, 2+5\phi, 6+9\phi,$

\vskip .1 in \noindent and, letting $u_n:=\dfrac{\phi d_{n+1}}{d_n},$
we have a sequence of elements in  $\Q^+(\phi)$ that begins:

\noindent $1+\phi,\phi,1,2+2\phi,1/2+\phi,3-\phi,2/5+\phi/5,1+2\phi,2,1/2+\phi/2,-1+\phi,2+\phi,3/5+4/5\phi,-2+2\phi,1/2,3+3\phi,2/3+\phi,-5+4\phi,{6/11}+2/11\phi,3/2+2\phi,-1/5+7/5\phi,{10/11}+3/11\phi,1/11+4/11\phi,4,3/4+3/4\phi,-1/3+\phi,{8/11}-\phi/11,7/5+6/5\phi,4/11+{10/11},2-\phi/2,3/11+\phi/11,2+3\phi,-1+2\phi,4/5+2/5\phi,\phi/2,3+\phi,{8/11}+{9/11},-3/5+6/5\phi,2/3,3/2+3/2\phi,1/3+\phi,8/5-\phi/5,3/11+2/11\phi,2\phi,1+\phi/2,1/5+3/5\phi,2-\phi,3+2\phi,{8/11}+{10/11},-1+3/2\phi,{7/11}+\phi/11,\ldots$
\vskip .1 in

By the definition of $(d_n)$ and $(u_n)$,

$$\begin{cases} u_{4n}&=\phi^2+u_n\\ 
u_{4n+1}&=\dfrac{\phi^3+\phi^2 u_n}{\phi^2+u_n} \\
 u_{4n+2}&=\dfrac{\phi+\phi u_n}{\phi+u_n}\\
 u_{4n+3}&=\dfrac {u_n}{1+u_n}
 \end{cases}\eqno{(18)}$$ 
 \noindent and so
 $$F(x):=\max\left\{ \dfrac x{1-x}, \dfrac{\phi x-\phi}{\phi-x}, \dfrac{\phi^2x-\phi^3}{\phi^2-x}, x-\phi^2\right\}$$
 satisfies $F(u_n)=u_{\lfloor n/4\rfloor}$ for all $n$.

Since $\Z[\phi]$ is a UFD, we may define, as above, $\Phi(a/b)=a+b$ where $a/b$ is in lowest terms over $\Z[\phi]$.  It is then easy to check that $$\Phi(F(x))<\Phi(x)\eqno{(19)}$$
for all $x\in\Q^+(\phi)$.    It is not clear, however,  if every element of $\Q^+(\phi)$ appears on this list since the range of $\Phi$ is not well-ordered -- the problem being that $\phi$ and all its integer powers are units in  $\Z[\phi]$.  We thus can only conjecture that
the sequence $(u_n)$ is an enumeration of $\Q(\phi)^+$.  
\vskip .1 in
\textbf{Note.}  A proof of this conjecture has resisted several attempts by the author.   Although computer evidence indicates it is true, it is of course possible for a counterexample to exist.   
\vskip .1 in

As above, we can find a recursive formula:
$$u_0=\infty \text{ and }u_{n+1}=\phi^2\left(2\left\lfloor\dfrac 1{u_n}\right\rfloor+1-\dfrac 1{u_n}\right).$$  
and we also have a semi-recursive formula $$u_n=\phi^2\left(2\nu_4(n)+1-\dfrac1{u_{n-1}}\right).$$

\section{Similarities and Differences}
In this section, we summarize some of the similarities and differences  between our various sequences.  

\subsection{Trees}

The four sequences we have studied each could be arranged on the vertices of trees defined, via the Tree Theorem, by a certain function.   
Here our functions are:
\begin{itemize}
\item $r_n: f(x)=\max\left\{\frac x{1-x}, x-1\right\},$
\item $s_n: f(x)= \max\left\{\frac x{1-x}, \frac{2x-2}{2-x}, x-2\right\},$
\item $t_n:  f(x)=\max\left\{\frac x{1-x}, \frac{3x-3}{3-2x}, \frac{2x-3}{2-x},\frac{3x-6}{3-x}, x-3\right\}.$
\item $u_n: f(x)=\max\left\{ \dfrac x{1-x}, \dfrac{\phi x-\phi}{\phi-x}, \dfrac{\phi^2x-\phi^3}{\phi^2-x}, x-\phi^2\right\}.$
\end{itemize}

A method of finding them is as follows.  
Given $\alpha\in\{1,\sqrt 2, \phi, \sqrt 3\}$,  let $M$ be the M\"obius transformation defined by $\begin{pmatrix} 1&\alpha\\0&-1\end{pmatrix}$, and let
$a_n=\alpha\cdot M_n(0)$  where $0=a_0<a_1<\ldots<a_N=\infty$.   The corresponding $F$ is given by
$$F(x)=\max\left\{ a_j\cdot\dfrac{x-a_{j-1}}{a_j-x}:  j=1\ldots N\right\}.$$

\subsection{Analogues of Stern's sequence}

For  $\alpha=1,\sqrt 2, \sqrt 3, \phi$, our sequences all have similar 3-term recurrences:

\begin{itemize}
\item $a_{n+1}=a_n+a_{n-1}-2(a_{n-1}\text{ mod } a_n),$
\item $b_{n+1}=\sqrt 2 b_n+b_{n-1}-2(b_{n-1}\text{ mod } (\sqrt 2 b_n)),$
\item $c_{n+1}=\sqrt 3 c_n+c_{n-1}-2(c_{n-1}\text{ mod } (\sqrt 3 c_n)),$
\item $d_{n+1}=\phi d_n+d_{n-1}-2(d_{n-1}\text{ mod } (\phi d_n)).$
\end{itemize}

From these, our sequences of rationals are as follows  (the last, of course, in ${\mathbb Q}(\phi))$): 

\begin{itemize}

\item $r_n=\dfrac{a_{n+1}}{a_n},$

\item $s_n=\dfrac{\sqrt 2 b_{n+1}}{b_n},$

\item $t_n:=\dfrac{\sqrt 3 c_{n+1}}{c_n},$

\item $u_n:=\dfrac{\phi d_{n+1}}{d_n}.$

\end{itemize}

\subsection{Semi-recursive formulas}
The sequences in the text all have semi-recursive formulas involving valuation functions $\nu_N(n)$.

Starting with $\infty$, 
\begin{itemize}
\item $r_n=1\left(2\nu_2(n)+1-\dfrac1{r_{n-1}}\right),$
\item $s_n=2\left(2\nu_3(n)+1-\dfrac1{s_{n-1}}\right),$
\item $t_n=3\left(2\nu_5(n)+1-\dfrac1{t_{n-1}}\right),$
\item $u_n=\phi^2\left(2\nu_4(n)+1-\dfrac1{u_{n-1}}\right).$
\end{itemize}

In fact, they are all of the form
$$x_{n}=4\cos^2\left(\frac{\pi}{N+1}\right)\cdot \left(2\nu_N(n)+1-\dfrac1{x_{n-1}}\right)$$
where $N=2,3,5$ and 4 respectively.     The corresponding sequences for $N>5$ surely share many of the properties of the sequences we have studied.  

\subsection{Recurrence formulas}
Let $$f(x):=1+\dfrac1x-2\left\{\dfrac 1x\right\}=2\left\lfloor\dfrac 1x\right\rfloor+1-\dfrac1x.$$
The 3-term recurrences above give:
\begin{itemize}
\item $r_{n+1}=f(r_n),$
\item $s_{n+1}=2f(s_n),$
\item $t_{n+1}=3f(t_n),$
\item $u_{n+1}=\phi^2 f(u_n).$
\end{itemize}
That is, all are of the form $$x_{n+1}=\alpha^2 f(x_n)$$
for $\alpha= 1, \sqrt 2,\sqrt 3, \phi$ respectively.

\subsection{Greedy algorithms}

The sequences above all can be obtained by greedy algorithms:  
\begin{itemize}

\item $r_n:  [l_1,\ldots,l_k]\mapsto[l_1,\ldots,l_k,n-1/l_k],$

\item $s_n:  [l_1,\ldots,l_k]\mapsto[l_1,\ldots,l_k,2(n-1/l_k)],$

\item $t_n:  [l_1,\ldots,l_k]\mapsto[l_1,\ldots,l_k,3(n-1/l_k)],$

\item $u_n:  [l_1,\ldots,l_k]\mapsto[l_1,\ldots,l_k,\phi^2(n-1/l_k)],$

\end{itemize}

where, in each case, $n$ is the least positive integer making the elements in the list distinct.

\subsection{Degrees}
The \emph{degree} of a sequence $(x_n)$ is a real number $\alpha$ such that for all $\epsilon>0$, 
$$\limsup_{n\rightarrow\infty}\dfrac{x_n}{n^{\alpha+\epsilon}}=0\qquad \text{ and } \qquad\limsup_{n\rightarrow\infty}\dfrac{x_n}{n^{\alpha-\epsilon}}=\infty.$$
The term comes from the fact that, for a polynomial $p$ of degree $N$, the sequence $(p(n))$ has degree $N$.  For a general sequence, this number need not exist.    However, if it does exist, then it is unique.

\begin{itemize} 
\item  $(a_n)$ has degree $\log_2(\phi)=0.694241914\ldots$.   This result follows from a stronger result by Coons and Tyler \cite{CT}.    It is also fairly easy to prove from basic properties of Stern's sequence \cite{N1} .

\item $(b_n)$ has degree $\log_3(1+\sqrt 2)=0.802260812\ldots$.  This result follows from a stronger result by Coons \cite{Coons}.    It is also fairly easy to prove from basic properties of $(b_n)$ that can be found in \cite{N2}.

\item $(c_n)$ has degree $\log_5(2+\sqrt 3)=0.818271949\ldots$.   This has been proved by the author using methods similar to those used in the previous cases (unpublished).

\item$(d_n)$ seems to have degree $\log_4\left(\dfrac{\phi^2+\sqrt{4+\phi^4}}2\right)=.7818951685\ldots$.  This result seems to follow by the same methods as above.   The result is surprising since one might expect the value to between those for $(b_n)$ and $(c_n)$ (i.e., between 0.802 and 0.819).   

\end{itemize}

\subsection{Generating functions}
The generating functions for these sequences each have product formulas easily deduced from the following expressions.   
By observation, the polynomials involved have each of their zeros equal to some root of unity (we call them ``primary roots").   
\begin{itemize}
\item$A(x)=\sum a_{n+1}x^n=\left(1+x+x^2\right)A(x^2),$

primary roots $e^{i\pi n/6}$, $n=4,8$.\vskip .1in

\item $B(x)=\sum b_{n+1}x^n=\left(1+\sqrt 2x+x^2\sqrt 2 x^3+x^4\right)B(x^3),$
primary roots $e^{i\pi n/12}$, $n=5,11,13,19$.\vskip .1in

\item $C(x)=\sum c_{n+1}x^n=\left(1+\sqrt 3x+2x^2+\sqrt 3x^3+x^4+\sqrt 3x^5+2 x^6+\sqrt3x^7+x^8\right)C(x^5),$

primary roots $e^{i\pi n/30}$, $n=7,17,19,29,31,41,43,53$.
 
\item $D(x)=\sum d_{n+1}x^n=\left(1+\phi x+ \phi x^2+ x^3+\phi x^4+\phi x^5+x^6 \right)D(x^4)$

primary roots $e^{i\pi n/20}$,  $n=6,14,16,24,26,34$.

\end{itemize}
We see that the sets of primary roots are, for $n=2,3,5,4$ respectively,
 $$\left\{e^{i\pi r}:   r=\tfrac {2j}n\pm\tfrac1{n+1}, j=1,\dots, n-1\right\}.$$

\subsection{Closed formulas}

Letting $\langle x_1,x_2,\ldots,x_n\rangle_k$ be 1 or 0 according as the $k$-ary expansions of $x_1,\ldots, x_n$ share no non-zero digits, recall by equation (10)

$$a_{n+1}=\sum_{a+2b=n} \langle a,b\rangle_2.$$
This is equivalent to the fact that diagonal sums across Pascal's triangle mod 2 give $a_{n+1}$ (where the unmodded sums give $F_{n+1}$).\vskip .1 in

By \cite{N2}, Th. 10
$$b_{n+1}=\sum_{a+2b+3c+4d=n} \langle a,b,c,d\rangle_3\sqrt 2^{a+c}$$ 
and it is not hard to see that 
$$c_{n+1}=\sum_{a+2b+3c+4d+5e+6f+7g+8h=n} \langle a,b,c,d,e,f,g,h\rangle_5\sqrt 3^{a+c+e+g}2^{b+f}.$$

Recall Binet's formula:
$$F_{n+1}=\sum_{k=0}^n \phi^{s(k)}\overline\phi^{s(n-k)}$$
where $\phi,\overline\phi$ are zeros of $x^2-x-1$ and $s(n)=n$.

The sequences $(a_n)$ and $(b_n)$ have Binet type formulas: by equation (11),
$$a_{n+1}=\sum_{k=0}^n \sigma^{s(k)}\overline\sigma^{s(n-k)}$$
where $\sigma,\overline\sigma$ are zeros of $x^2+x+1$ and $s(n)$ is the number of ones in the binary expansion of $n$. \vskip .1 in

By \cite{N2}, Th. 12
$$b_{n+1}=\sum_{k=0}^n \tau^{s(k)}\overline\tau^{s(n-k)}$$
where $\tau,\overline\tau$ are zeros of $x^2-\sqrt 2x-1$ and $s(n)$ is the number of ones in the ternary expansion of $n$. 

The proofs are from the formulas for the generating functions;   it is not clear if $(c_n)$ and $(d_n)$ have similar formulas.

\subsection{Singular functions}

As noted in \cite{N1},
$f\left(\frac n{2^k}\right):=\dfrac{a_n}{a_{2^k+n}}$
is Conway's box function with $f^{-1}(x)= ?(x)$, Minkowski's question mark function
defined by 
$$f^{-1}([0,c_1,c_2,\ldots])=\sum\frac{(-1)^{k+1}}{2^{c_1+\ldots+c_k-1}}.$$
\vskip .1 in

As noted in \cite{N2},
$g\left(\frac n{3^k}\right):=\dfrac{b_n}{b_{3^k+n}}$
has a singular inverse function $g^{-1}(x)$ satisfying
$$g^{-1}([0,c_1\sqrt 2,c_2\sqrt 2,\ldots])=\sum\frac{(-1)^{k+1}}{3^{c_1+\ldots+c_k-1}}.$$
\vskip .1 in

Based on these results, it seems likely that 
$h\left(\frac n{5^k}\right):=\dfrac{c_n}{c_{5^k+n}}$
has a singular inverse function $h^{-1}(x)$ satisfying
$$h^{-1}([0,c_1\sqrt 3,c_2\sqrt 3,\ldots])=\sum\frac{(-1)^{k+1}}{5^{c_1+\ldots+c_k-1}}$$
\vskip .1 in
\noindent and that
$l\left(\frac n{4^k}\right):=\dfrac{d_n}{d_{4^k+n}}$
has a singular inverse function $l^{-1}(x)$ satisfying
$$l^{-1}([0,d_1\phi,d_2\phi,\ldots])=\sum\frac{(-1)^{k+1}}{4^{d_1+\ldots+d_k-1}}.$$

\subsection{Minus continued Fractions}

``Minus continued fractions" are of the form $$(a_0,a_1,a_2,\ldots):=a_0-\frac 1 {a_1-\frac 1{ a_2-\frac 1{a_3-\ldots}}}$$ where $a_i\in{\mathbb Z}$.     
\vskip .1 in

Every positive rational is a minus continued fraction of the form $$u_n-\frac 1 {u_{n-1}-\frac 1{ u_{n-2}-\frac 1{u_{n-3}-\ldots}}}=(u_n, u_{n-1}, u_{n-2}, u_{n-3}, \ldots, u_1)$$  where $u_n=2\nu_2(n)+1$.  

Every positive rational is a minus continued fraction of the form $$2u_n-\frac 2 {2u_{n-1}-\frac 2{ 2u_{n-2}-\frac 2{2u_{n-3}-\ldots}}}=(2u_n, u_{n-1}, 2u_{n-2}, u_{n-3}, \ldots, cu_1)$$  where $u_n=2\nu_3(n)+1$ and $c$ is 1 or 2, depending on the parity of $n$.  

Every positive rational is a minus continued fraction of the form $$3u_n-\frac 3 {3u_{n-1}-\frac 3{ 3u_{n-2}-\frac 3{3u_{n-3}-\ldots}}}=(3u_n, u_{n-1}, 3u_{n-2}, u_{n-3}, \ldots, cu_1 )$$  where $u_n=2\nu_5(n)+1$ and $c$ is 1 or 3, depending on the parity of $n$.

As with equation (8),  we can also express our rational sequences in terms of minus continued fractions defined by lists.   Let $\ell_0$ be the empty list $[\hskip .05 in]$ and, for a given $k\in\{2,3,4,5,\ldots\}$, let 
$$\begin{cases}
&\ell_{kn}:=1+\ell_n\\
&\ell_{kn+1}:=1*\ell_{kn}\\
&\ell_{kn+2}:=1*\ell_{kn+1}\\
&\ldots\ldots\ldots\ldots\ldots\ldots\ldots\\
&\ell_{kn+k-1}=1*\ell_{kn+k-2}\\
\end{cases}\eqno{(20)}$$

For our given $k$ and its corresponding lists $\ell_n=[\ell_{n,0}, \ell_{n,1}, \ell_{n,2},\ldots]$, let $\alpha=2\cdot\cos\left(\frac{\pi}{k+1}\right)$ and define a sequence $(x_n)$ in terms of minus continued fractions
$$x_n:=\alpha\cdot(\alpha \ell_{n,0},\alpha \ell_{n,1},\alpha \ell_{n,2},\ldots).$$
These sequences agree with $(a_n), (b_n), (c_n),(d_n)$ when $k=2,3,5,4$ respectively.

\end{document}